\newcommand{\R}{\mathbb{R}}
\newcommand{\C}{\mathbb{C}}
\newcommand{\Q}{\mathbb{Q}}
\newcommand{\Z}{\mathbb{Z}}
\newcommand{\ra}{\rightarrow}
\newtheorem{thm}{Theorem}
\newtheorem{df}[thm]{Definition}
\newtheorem{prop}[thm]{Proposition}
\newtheorem{lem}[thm]{Lemma}
\newtheorem{cor}[thm]{Corollary}
\newtheorem{conj}[thm]{Conjecture}
\newtheorem{question}{Question}
\theoremstyle{remark}
\newtheorem{rk}[thm]{Remark}
\title{On the Picard number of $K3$ surfaces over number fields} 
\author{Fran\c{c}ois Charles}
\email{francois.charles@univ-rennes1.fr}
\address{Université de Rennes 1, IRMAR--UMR 6625 du CNRS, Campus de Beaulieu, 35042 Rennes
Cedex, France}
\thanks{This paper was written during a stay at the university of Bonn. I would like to thank Daniel Huybrechts for his hospitality and many useful comments.}
\begin{document}

\begin{abstract}
 We discuss some aspects of the behavior of specialization at a finite place of
N\'eron-Severi groups of $K3$ surfaces over number fields. We give optimal lower bounds for
the Picard number of such specializations, thus answering a question of Elsenhans and Jahnel.
As a consequence of these results, we show that it is possible to explicitly compute the
Picard number of any given $K3$ surface over a number field.
\end{abstract}

\maketitle

\section{Introduction}

This paper deals with two questions concerning the arithmetic and the geometry
of $K3$ surfaces. Let $X$ be a polarized $K3$ surface over a number field $k$, and let
$\mathfrak p$ be a finite place of $k$ where $X$ has good reduction. Denote by $X_{\mathfrak
p}$ the special fiber of a smooth model of $X$ over the ring of integers of $k_{\mathfrak p}$.
Denote by $\overline{X}$ (resp. $\overline{X_{\mathfrak p}}$) the base change of $X$ (resp.
$X_{\mathfrak p}$) to an algebraic closure of $k$ (resp. the residue field of $\mathfrak p$). 
Specialization of divisors induces a specialization map between the N\'eron-Severi groups of
$\overline{X}$ and $\overline{X_{\mathfrak p}}$.

\begin{question}\label{sp}
 What can be said about the specialization map
\begin{equation}\label{speq}
 sp : NS(\overline{X})\ra NS(\overline{X_{\mathfrak p}})\,?
\end{equation}
\end{question}

A standard argument using the cycle class map and the smooth base change theorem shows that
this specialization map is always injective. We are here interested in the defect of
surjectivity.

The second question is the following. Recall that the Picard number of a variety is by
definition the rank of its N\'eron-Severi group.

\begin{question}\label{rho}
Given a projective embedding of $X$, is it possible to compute the Picard number of $X$~?
\end{question}
This question is raised by Shioda in \cite{Sh81}.

Using the Weil conjectures \cite{De74}, it is possible to compute the Picard numbers
of smooth projective varieties over finite fields. Indeed, counting points in sufficiently many
extensions of the base field, one can compute the characteristic polynomial of the Frobenius
acting on the second \'etale cohomology group, and determine the multiplicity of $1$ as an
eigenvalue. If the Tate conjecture holds, this multiplicity is equal to the
Picard number.

\bigskip

In characteristic zero, Question \ref{rho} is more difficult. In particular, the first explicit
example of a K3 surface over a number field with Picard rank $1$ has been recently given by van
Luijk in \cite{vL07}. Van Luijk's method provides a link between both questions. Indeed, it
proceeds by computing Picard numbers at sufficiently many finite places in order to get
information over the field of definition. In the past few years, the problem of computing Picard numbers of $K3$ surfaces has been featured for instance in the work of Elsenhans-Jahnel \cite{EJ08}, \cite{EJ082}, with recent geometric applications in the work of Hassett, V{\'a}rilly-Alvarado and V\'arilly, \cite{HVAV11} and \cite{HVA11}.

With this approach, one of the main problems is finding finite places $\mathfrak p$ such that
the specialization map (\ref{speq}) is as close to being surjective as possible, i.e., such
that $\rho(\overline{X_{\mathfrak p}})$ is as small as possible.

\bigskip

Note that the situation in this mixed characteristic setting is in stark contrast with the
case of equal characteristic zero. Indeed, for $K3$ surfaces defined over function fields over
$\C$
or $\overline\Q$, most specializations induce isomorphisms at the level of the N\'eron-Severi
group. This is a consequence of Baire's theorem over $\C$, see for instance \cite{Vo02},
Chapter 13, and of the Hilbert irreducibility theorem over $\overline \Q$, see \cite{Te85} and
\cite{El02}. A different approach to this problem can be found in \cite{MP09}.

On the other hand, over finite fields, there are obstructions for the map (\ref{speq}) to be
surjective, as was first noticed by Shioda in \cite{Sh81} and \cite{Sh83}. Indeed, it is a
consequence of the Tate conjecture that the geometric Picard number of a $K3$ surface over a
number field is always even, see for instance \cite{dJK00}. This striking fact has been
recently used in a surprising way by Bogomolov-Hassett-Tschinkel in \cite{BHT11} and Li-Liedtke
in \cite{LL10} to prove that any complex $K3$ surface with odd Picard rank contains infinitely
many rational curves.

In this paper, we describe the Shioda-type obstructions that can prevent the map (\ref{speq})
from being surjective, and we give optimal lower bounds for the Picard number of the
specialization. One of our results is that Hodge theory can force the existence of such obstructions even when the Picard number is even, see part (2) of Theorem \ref{jump} below. 
\bigskip

Let $X$ be a $K3$ surface over a number field $k$, and choose a complex embedding of $k$. Let
$\rho$ be the geometric Picard number of $X$ and, for any finite place $\mathfrak p$ of $k$
where $X$ has good reduction, let $\rho_{\mathfrak p}$ be the geometric Picard number of
$X_{\mathfrak p}$. Note that we always have $$\rho_{\mathfrak p}\geq \rho.$$ 

We need to control the Hodge theory of $X_{\C}$. Let $T$ be the orthogonal of $NS(X_{\C})$ in
the singular cohomology group $H^2(X_{\C}, \Q)$ with respect to cup-product. The space $T$ is a
sub-Hodge structure of $H^2(X_{\C}, \Q)$. Let $E$ be the algebra of endomorphisms of $T$ that
respect the Hodge structure. In \cite{Za83}, Zarhin shows that $E$ is either a
totally real field or a CM field.

The following result can be considered as a number field
analog of the specialization results over function fields mentioned above.

\begin{thm}\label{jump}
Let $X$, $T$ and $E$ be as above.
\begin{enumerate} 
 \item If $E$ is a CM field or the dimension of $T$ as an $E$-vector space is even, then there
exist infinitely many places $\mathfrak p$ of good
reduction such that $\rho_{p}=\rho$. Furthermore, after replacing $k$ by a finite
extension, this equality holds for a set of places of density $1$.
     
     \newpage
     
 \item Assume $E$ is a totally
real field and the dimension of $T$ as an $E$-vector space is odd. 

Let $\mathfrak p$ be a finite place of $k$ where $X$ has good reduction. If $X_{\mathfrak p}$
satisfies the Tate conjecture, then
$$\rho_{\mathfrak p}\geq \rho+[E:\Q].$$

There exist infinitely many places
$\mathfrak p$ of good reduction such that $\rho_{\mathfrak p}=\rho+[E:\Q]$. Furthermore, after
replacing $k$ by a finite extension, this equality holds for a set of places of density
$1$.
\end{enumerate}
\end{thm}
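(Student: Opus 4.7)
The plan is to extract the statement from the Galois action on the transcendental $\ell$-adic cohomology, combined with the Mumford--Tate conjecture for $K3$ surfaces and a Chebotarev equidistribution argument. Fix a prime $\ell$ and regard $T_\ell := T \otimes_\Q \Q_\ell$ as the orthogonal complement of $NS(\overline X) \otimes \Q_\ell$ in $H^2_{\mathrm{et}}(\overline X, \Q_\ell(1))$; the algebra $E$ acts $\Q_\ell$-linearly on $T_\ell$ and commutes with the Galois action. Using Zarhin's description, the Mumford--Tate group $MT(T)$ equals $\mathrm{Res}_{E/\Q}\mathrm{SO}(T,\psi)$ if $E$ is totally real and $\mathrm{Res}_{E_0/\Q}\mathrm{U}(T,h)$ if $E$ is CM with maximal totally real subfield $E_0$. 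The Mumford--Tate conjecture for transcendental $K3$ cohomology (Tankeev, Andr\'e) provides, after replacing $k$ by a finite extension, an open and hence Zariski dense image of the absolute Galois group of $k$ in $MT(T)(\Q_\ell)$.

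The second step is a pointwise analysis of $+1$-eigenspaces. Write $d = \dim_E T$. Over $\overline{\Q_\ell}$, $MT(T)$ decomposes as a product, indexed by places of $E$ (respectively $E_0$) above $\ell$, of factors of type $\mathrm{SO}_d$ or $\mathrm{U}_d$. A generic element of $\mathrm{U}_d$ has no $+1$-eigenvalue; the same is true of a generic element of $\mathrm{SO}_{2m}$, since the condition ``$1$ is an eigenvalue'' cuts out a proper Zariski closed subset. By contrast, \emph{every} element of $\mathrm{SO}_{2m+1}$ has $+1$ as an eigenvalue -- the characteristic polynomial has odd degree, all roots of absolute value $1$, and product $+1$, so at least one real root is $+1$ -- and a generic such element has a one-dimensional $+1$-eigenspace. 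After restriction of scalars from $E$ to $\Q$, the $+1$-eigenspace of a generic $g \in MT(T)(\Q_\ell)$ on $T_\ell$ has $\Q_\ell$-dimension $0$ in case (1) and exactly $[E:\Q]$ in case (2).

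The generic locus is the complement of a proper Zariski closed subset of $MT(T)_{\Q_\ell}$, hence has full Haar measure in the open subgroup of $MT(T)(\Q_\ell)$ cut out by the Galois image. Chebotarev equidistribution then yields a density-$1$ set of finite places $\mathfrak p$ at which $\mathrm{Frob}_{\mathfrak p}$ lies in the generic locus; for these, the multiplicity of $+1$ as an eigenvalue of $\mathrm{Frob}_{\mathfrak p}$ on $H^2_{\mathrm{et}}(\overline{X_{\mathfrak p}}, \Q_\ell(1))$ equals $\rho$ in case (1) and $\rho + [E:\Q]$ in case (2). The cycle class map gives $\rho_{\mathfrak p}$ as a lower bound for this multiplicity, with equality under the Tate conjecture; hence case (1) yields $\rho_{\mathfrak p} = \rho$ and case (2) yields $\rho_{\mathfrak p} \le \rho + [E:\Q]$. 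The matching lower bound in case (2) at every good place where Tate holds is unconditional on $\ell$-adic genericity: every Frobenius lies in $\mathrm{Res}_{E/\Q}\mathrm{SO}_d(\Q_\ell)$, and the odd-dimensional parity argument forces a $+1$-eigenspace of $\Q_\ell$-dimension at least $[E:\Q]$.

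The main obstacle is keeping the $E$-module bookkeeping straight when passing from $MT(T)$ to $GL(T_\ell)$ via restriction of scalars, so that the count $[E:\Q]$ emerges correctly; the Mumford--Tate conjecture for transcendental $K3$ cohomology is invoked as a black box. The density-$1$ refinement (rather than only infinitely many places) is precisely what forces the passage to a finite extension of $k$, since the image of Galois in $MT(T)(\Q_\ell)$ need only become connected after such an extension, and without the Tate conjecture the lower bound in Part 2 is out of reach.
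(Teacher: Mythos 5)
Your overall strategy is the same as the paper's (Zarhin's description of $MT(T)$, Tankeev's Mumford--Tate theorem, an eigenvalue analysis of generic elements of the centralizer of $E$, and Chebotarev), but there are two genuine gaps. The first and most serious: you control only the eigenvalue $+1$ of $\mathrm{Frob}_{\mathfrak p}$, whereas the geometric Picard number $\rho_{\mathfrak p}$ is the rank of $NS$ over the \emph{algebraic closure} of the residue field, so divisor classes there are defined over finite extensions and contribute Frobenius eigenvalues that are arbitrary roots of unity, not just $+1$. Thus the inequality you use --- ``$\rho_{\mathfrak p}$ is at most the multiplicity of $+1$'' --- is false as stated; the correct bound is by the number of eigenvalues that are roots of unity. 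The paper repairs this by noting that the characteristic polynomial of $F_{\mathfrak p}$ on $H^2(\overline{X_{\mathfrak p}},\Q_\ell(1))$ has rational coefficients of degree $n=\dim_\Q T$ (Weil conjectures), so any root-of-unity eigenvalue lies in the finite set $S$ of roots of unity of degree at most $n$ over $\Q$; one then imposes the Zariski-open condition that Frobenius avoid all of $S$ (resp.\ all of $S\setminus\{1\}$), which Proposition \ref{eigenvalues} shows is nonempty. Your generic locus, defined only by conditions on the $+1$-eigenspace, does not give the upper bound on $\rho_{\mathfrak p}$ in either case.

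The second gap concerns the unconditional claim in part (2) that infinitely many (indeed a density-one set of) places satisfy $\rho_{\mathfrak p}=\rho+[E:\Q]$. Your argument produces the matching lower bound only ``under the Tate conjecture,'' and you even remark that without it the bound is out of reach --- but the theorem asserts this equality unconditionally. The paper closes this by invoking Bogomolov--Zarhin (after a finite extension of $k$, the set of places of good \emph{ordinary} reduction has density one) together with Nygaard's theorem that ordinary $K3$ surfaces over finite fields satisfy the Tate conjecture; intersecting with the Chebotarev set gives a density-one set of places where both the eigenvalue condition and the Tate conjecture hold. A minor further point: your assertion that the locus ``$1$ is an eigenvalue'' is a \emph{proper} closed subset of $\mathrm{SO}_{2m}$ (and that the $[E:\Q]$-dimensional eigenspace is attained generically in the odd case) requires exhibiting an actual element, which the paper does by reducing to a rank-two orthogonal summand over $E$ and taking a rotation of infinite order.
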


\begin{rk}
 Note that if $\rho$ is odd, $X$ satisfies the assumptions of the second part of the
theorem.
\end{rk}

\begin{rk}
 By work of Nygaard and Nygaard-Ogus in \cite{Ny83}, \cite{NO85}, the Tate conjecture holds for
ordinary $K3$ surfaces
over finite fields and non-supersingular $K3$ surfaces over fields of characteristic at least
$5$.
\end{rk}

\begin{rk}\label{cex}
 In \cite{EJ11}, Elsenhans and Jahnel ask whether, with notations as in the theorem, there
exists $\mathfrak p$ such that $\rho_{\mathfrak p}-\rho\leq 1$. The result above shows that it
is not the case if $E$ is a totally real field of degree at least $2$ over $\Q$, such that the
dimension of $T$ over $E$ is odd. This is however true in all other cases.
\end{rk}

 This result shows that the Picard number can be forced to jump in specializations even when
the Picard number of $X$ is even. Using the method of Li and Liedtke in \cite{LL10}, we get the following corollary.

\begin{cor}
Let $X$ be either a $K3$ surface of Picard rank $2$ with $E$ a totally real field of degree $4$ or a $K3$ surface of Picard
rank $4$ with $E$ a totally real field of even degree. Then $X$ contains infinitely many rational curves.
\end{cor}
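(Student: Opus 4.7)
The plan is to reduce both cases to part (2) of Theorem \ref{jump} and then apply the rational-curve construction of Li-Liedtke \cite{LL10}.

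For the reduction, I use that $\dim_{\Q} T = 22 - \rho$ and that $[E:\Q]$ divides $\dim_\Q T$ since $T$ is an $E$-vector space. In the first case ($\rho=2$, $[E:\Q]=4$) this gives $\dim_E T = 5$, which is odd. In the second case ($\rho=4$ with $[E:\Q]$ an even divisor of $18$), the possibilities for $[E:\Q]$ are $2$, $6$, or $18$, so $\dim_E T\in\{9,3,1\}$, all odd. Since $E$ is totally real by hypothesis, Theorem \ref{jump}(2) applies in both cases and yields infinitely many finite places $\mathfrak p$ of good reduction with $\rho_{\mathfrak p} \geq \rho + [E:\Q] \geq \rho + 2$; in particular the geometric Picard rank strictly jumps at each of these primes.

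Next, I would feed this strict jumping into the argument of \cite{LL10}. That paper proves that any complex $K3$ surface with odd geometric Picard rank contains infinitely many rational curves. Inspecting their proof, the parity obstruction (namely, that the geometric Picard rank of a $K3$ surface over a number field is always even) is used only to arrange that $\rho_{\mathfrak p} > \rho$ at infinitely many primes of good reduction; the subsequent part of the argument then produces, in each such reduction, a rational curve whose class lies outside the image of specialization, and lifts this unbounded collection of rational curves to $X_\C$ via a formal deformation argument based on the triviality of the canonical bundle. Substituting Theorem \ref{jump}(2) for the parity input gives the corollary.

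The hard part will be verifying that this substitution is harmless. The input to the deformation-theoretic lifting of \cite{LL10} is simply an infinite collection of rational curves in reductions whose classes do not come from $X$, and the strict jumping provided by Theorem \ref{jump}(2) supplies precisely this data. Neither the parity nor the specific size of the jump plays any further role in the argument, so one expects no substantive obstruction beyond confirming that each step of \cite{LL10} is indifferent to the size of the jump.
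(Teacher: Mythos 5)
Your proposal matches the paper's (essentially unwritten) argument: the paper likewise derives the corollary by checking that both cases fall under part (2) of Theorem \ref{jump} — since $\dim_E T=(22-\rho)/[E:\Q]$ is odd in each case — and then invoking the Li--Liedtke method, whose only arithmetic input is the strict jump $\rho_{\mathfrak p}>\rho$ at infinitely many places of good reduction. Your divisibility bookkeeping ($\dim_E T=5$ for $\rho=2$, $[E:\Q]=4$; $\dim_E T\in\{9,3,1\}$ for $\rho=4$ with $[E:\Q]$ an even divisor of $18$) is correct, so the proposal is fine.
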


There exist such $K3$ surfaces by \cite{vG08}, section 3, and they give new examples of $K3$ surfaces with infinitely many rational curves. Note that complex $K3$ surfaces of Picard rank different from $2$ and $4$ are known to contain infinitely many rational curves by \cite{LL10}.

\bigskip

The second main result of this paper is a solution to Question \ref{rho}. Recall that van
Luijk's method in \cite{vL07} to prove that a $K3$ surface $X$ over $\Q$ has Picard number $1$
was to first find two primes $p$ and $q$ of good reduction such that $X$ specializes to a $K3$
surface
of Picard number $2$ modulo $p$ and $q$. If the discriminant of the N\'eron-Severi lattices
modulo $p$ and $q$ differ by a non-square factor, van Luijk shows that this implies that $X$
has Picard number $1$.

By Remark \ref{cex}, there are cases where we cannot expect van Luijk's method to work directly
for all $K3$ surfaces of rank $1$. However, the second part of Theorem \ref{jump} can be used
to show that reduction at finite places does indeed give enough information to compute Picard
numbers over number fields. 

This gives a theoretical explanation to the computations in \cite{vL07}, \cite{EJ08}, \cite{EJ082}, \cite{HVAV11}, \cite{HVA11}.

\begin{thm}\label{compute}
 There exists an algorithm which, given a projective $K3$ surface $X$ over a number field,
either returns its geometric Picard number or does not terminate. 

If $X\times X$ satisfies the Hodge conjecture for codimension $2$ cycles, then the algorithm applied to $X$ terminates.
\end{thm}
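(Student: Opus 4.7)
The plan is to run three procedures in parallel and halt when a consistency check succeeds. Procedure A enumerates subschemes of $X$ of bounded degree in the given projective embedding and records, for each, the class it contributes to a working sublattice of $\mathrm{NS}(\overline{X})$; its rank produces an increasing sequence of lower bounds $\rho_-^{(n)}$ tending to $\rho(X)$, since $\mathrm{NS}(\overline{X})$ is finitely generated. Procedure B iterates through primes $\mathfrak p$ of good reduction of residue characteristic at least $5$, counts points on $X_{\mathfrak p}$ over extensions $\mathbb{F}_{q^i}$ for enough $i$ to recover the characteristic polynomial of Frobenius on $H^2_\ell$, and extracts the number $\rho_{\mathfrak p}^F$ of eigenvalues of the form $q\zeta$ with $\zeta$ a root of unity; by the cycle class map, $\rho(X) \leq \rho_{\mathfrak p} \leq \rho_{\mathfrak p}^F$. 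Procedure C is a discriminant comparison generalizing van Luijk's method: whenever Procedure B yields the same value $\rho_{\mathfrak p}^F = \rho_{\mathfrak q}^F = \rho_-^{(n)} + r$ at two primes $\mathfrak p, \mathfrak q$, it computes the discriminants of the Frobenius-invariant lattices $L_{\mathfrak p}, L_{\mathfrak q} \subset H^2_\ell$ and tests whether their ratio is a square in $\Q^\times$.

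\textbf{Correctness of outputs.} The algorithm stops and returns $\rho_-^{(n)}$ in two situations. First, if Procedures A and B meet with $\rho_-^{(n)} = \rho_{\mathfrak p}^F$, the squeeze $\rho_-^{(n)} \leq \rho(X) \leq \rho_{\mathfrak p}^F$ forces equality. Second, if Procedure C succeeds, we use the fact that $\mathrm{NS}(\overline X)$ embeds into every $L_{\mathfrak p}$ compatibly with intersection forms, together with the elementary observation that any two overlattices of a fixed rank-$r$ lattice have discriminants that agree modulo $(\Q^\times)^2$: inequality of the discriminant classes of $L_{\mathfrak p}$ and $L_{\mathfrak q}$ is therefore an obstruction to $\rho(X)$ being equal to $\rho_{\mathfrak p}^F$, and we must have $\rho(X) = \rho_-^{(n)}$ once the gap $r$ is compatible with Theorem \ref{jump}.

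\textbf{Termination under the Hodge conjecture.} We split along the two cases of Theorem \ref{jump}. When $E$ is CM or $\dim_E T$ is even, a density-one set of primes satisfies $\rho_{\mathfrak p} = \rho(X)$; at non-supersingular such primes of characteristic at least $5$, the Tate conjecture (Nygaard--Ogus) gives $\rho_{\mathfrak p}^F = \rho_{\mathfrak p}$, so the A/B stopping rule eventually fires once Procedure A has reached $\rho(X)$. When $E$ is totally real and $\dim_E T$ is odd, Theorem \ref{jump}(2) gives $\rho_{\mathfrak p}^F \geq \rho(X) + [E:\Q]$ at every good prime, so the A/B rule can never fire and termination rests entirely on Procedure C. Here the hypothesis that $X \times X$ satisfies the Hodge conjecture in codimension $2$ enters essentially: it guarantees that the $E$-action on the transcendental Hodge structure $T$ lifts to an algebraic correspondence on $X \times X$, which specializes at each $\mathfrak p$ and endows the extra Frobenius-fixed classes with an $E$-module structure. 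The main obstacle is then to show that the quotient discriminants computed by Procedure C take at least two distinct values in $\Q^\times/(\Q^\times)^2$ as $\mathfrak p$ ranges over primes of minimal jump; the idea is to analyze the rank-$[E:\Q]$ lattice of extra classes as a quadratic form depending on the splitting type of $\mathfrak p$ in $E$, and to use Chebotarev density to produce two primes realizing inequivalent discriminant classes.
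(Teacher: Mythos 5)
Your Procedures A and B match the paper's steps (1) and (3), and your Procedure C is in the spirit of the paper's use of Proposition \ref{disc}. But there is a genuine gap in the hard case ($E$ totally real, $\dim_E T$ odd), and it is exactly the point where the paper needs its second Hilbert-scheme search. The discriminant obstruction by itself only shows that specialization is not surjective, i.e. $\rho(X)\leq \rho_{\mathfrak p}^F-1$. To close the sandwich down to $\rho_-^{(n)}$ when the observed gap is $r=[E:\Q]\geq 2$, you must know that the defect $NS(\overline{X_{\mathfrak p}})\otimes\Q_\ell\cap T_\ell(1)$ is an $E$-module and, crucially, you must have a \emph{certified lower bound} $d$ on $[E:\Q]$ in hand; only then does $\rho(X)\leq\rho_{\mathfrak p}^F-d$ follow. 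Your algorithm never computes anything about $E$: you invoke the Hodge conjecture to assert abstractly that the extra classes carry an $E$-action, but the halting rule ``once the gap $r$ is compatible with Theorem \ref{jump}'' is not something the machine can check, since it does not know $E$ or $\dim_E T$. Worse, if Procedure A has not yet caught up to the true Picard number, two primes can exhibit a discriminant mismatch with a gap $r>[E:\Q]$, and your rule would halt with a wrong answer. The paper fixes this by running a third enumeration: search Hilbert schemes of $X\times X$ for codimension $2$ cycles, compute their action on $H^2$ (characteristic polynomial of the correspondence, then the eigenvalue on the $2$-form), and thereby produce an increasing certified lower bound $d$ on $[E:\Q]$; the Hodge conjecture for $X\times X$ is used precisely to guarantee this search eventually reaches $d=[E:\Q]$. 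This missing procedure is the heart of the theorem, not an implementation detail.

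A second, lesser issue: you state as ``the main obstacle'' that two distinct discriminant classes actually occur among minimal-jump primes, and only sketch an approach via the splitting type of $\mathfrak p$ in $E$. This is the content of the paper's Proposition \ref{disc}, and its proof goes differently: one decomposes $T$ into rank-one $E$-subspaces $T_i$ orthogonal for the $E$-bilinear form $\phi$ with $\psi=\mathrm{Tr}_{E/\Q}(\phi)$, uses the Hodge index theorem together with $\dim_E T\geq 3$ to show that the $T_i$ realize discriminants of both signs, and then combines an $\ell$-adic approximation lemma with Chebotarev to find Frobenius elements whose fixed spaces agree with different $T_i$ modulo a high power of $\ell$, hence have distinct discriminants in $\Q_\ell^*/(\Q_\ell^*)^2$. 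It is not clear that the splitting behaviour of $\mathfrak p$ in $E$ controls the discriminant of the extra lattice at all, so as written this step is unproved.
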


\begin{rk}
Let $X$ be a $K3$ surface over $\C$. With the notations of Theorem \ref{jump}, $X\times X$
satisfies the Hodge conjecture if and only if the field $E$ acts by algebraic correspondences.
By \cite{A96}, this would be a consequence of the standard conjectures. In \cite{Mu02}, Mukai
has announced a proof in the case $E$ is a CM field.
\end{rk}

\begin{rk}
 The proof of the theorem actually shows that the only case where the algorithm would not
terminate is, with the notations of Theorem \ref{jump}, if $E$ is a totally real field that
does not act on $H^2(X, \Q)$ by algebraic correspondences and $T$ is of odd dimension as a
vector space over $E$.

In particular, the algorithm always terminates for surfaces with $E=\Q$.
\end{rk}

\bigskip

While we only consider $K3$ surfaces in this paper, some of the methods we consider have a
wider range of applications. Assuming general conjectures on algebraic cycles, it is a general
fact that the Mumford-Tate group associated to the second cohomology group of a variety
controls specialization of N\'eron-Severi groups, in a fashion that is similar to the way the
monodromy representation appears in \cite{El02} or \cite{MP09}. The multiplicity of the weight zero in the corresponding representation is what forces the Picard number to jump after specialization. This is related to algorithmic computations of N\'eron-Severi groups as in our paper. 

For $K3$ surfaces, the work of Zarhin and Tankeev in \cite{Za83} and \cite{Ta90}, \cite{Ta95}
allows us to give precise and unconditional results. The results of our paper conjecturally
hold for varieties with $h^{2, 0}=1$. It seems likely that one can prove them unconditionally
for holomorphic symplectic varieties by extending the work of Tankeev cited above.

\bigskip

In section 2, we recall results of Zarhin and Tankeev on the second cohomology group of a $K3$
surface. This allows us to prove Theorem \ref{jump} in section 3. Section 4 is devoted to
discriminant computations which will allow us to prove Theorem \ref{compute} in the last
section.

\section{Algebraic monodromy groups of $K3$ surfaces over number fields}

The results of this section are mostly contained in the work of Zarhin and Tankeev. After
recalling some
preliminary material, we describe the algebraic monodromy group of a $K3$ surface defined over
a number field.

\subsection{Mumford-Tate groups and the Mumford-Tate conjecture}

Let $\mathbb S$ be the Deligne torus, that is, the algebraic group over $\R$ defined as
$$\mathbb S=\mathrm{Res}_{\C/\R} \mathbb G_m.$$
Let $H$ be a finite-dimensional vector space over $\Q$. Giving a Hodge structure on $H$ is
equivalent to giving an action of $\mathbb S$ on $H_{\R}=H\otimes\R$.

\begin{df}
 Let $H$ be a rational Hodge structure. The Mumford-Tate group of $H$ is the smallest
algebraic subgroup $MT(H)$ of $GL(H)$ such that $MT(H)_{\R}$ contains the image of
$\mathbb S$ in $GL(H_{\R})$.
\end{df}

We refer to \cite{DMOS82}, Chapter I, for general properties of Mumford-Tate groups. Since
$\mathbb S$ is connected, this definition implies that Mumford-Tate groups are connected.
Note that the Mumford-Tate group of a polarized Hodge structure is reductive.

Let $i,j$ be nonnegative integers, and consider the Hodge structure 
$$V=H^{\otimes i}\otimes (H^*)^{\otimes j}.$$
The Mumford-Tate group $MT(H)$ acts on $V$. If $v$ is a Hodge class in $V$, then the line $\Q
v$ is globally invariant under the action of $MT(H)$. Conversely, it follows from Chevalley's
theorem on affine groups that $MT(H)$ is the largest algebraic subgroup of
$GL(H_{\C})$ that
leaves all such lines globally invariant, see \cite{DMOS82}.

\bigskip

We now turn to the $\ell$-adic theory. General results can be found in \cite{Se81}. Let $k$ be a
number field and fix an algebraic closure $\overline k$. Let $X$ be a smooth projective variety
over $k$, and denote by $\overline{X}$ the variety $X\times_{\mathrm{Spec} k}
\mathrm{Spec}\,\overline k$. Fixing a prime number $\ell$, we can consider the \'etale
cohomology
group $H^i(\overline X, \Q_{\ell})$ for some integer $i$.
Let $\rho_{\ell}$ denote the continuous representation 
$$\rho_{\ell} : G_k\ra GL(H^i(\overline X, \Q_{\ell}))$$
of the absolute Galois group $G_k$ of $k$. The image of $\rho_{\ell}$ is an $\ell$-adic Lie
group.

\begin{df}
 With notations as above, let $G_{\ell}$ be the Zariski closure of the image of
$\rho_{\ell}$ in the algebraic group $GL(H^i(\overline X, \Q_{\ell}))$. The algebraic group
$G_{\ell}$ is
called the
algebraic monodromy group associated to the Galois representation $\rho_{\ell}$.
\end{df}

Note that replacing $k$ by a finite extension replaces $G_{\ell}$ by an open subgroup of finite
index. In particular, the neutral component of the algebraic monodromy group does not depend on
the choice of a field of definition for $X$.

\bigskip

General conjectures on algebraic cycles give important information on Mumford-Tate and
algebraic monodromy groups. In particular, the latter are expected to be reductive. The
expected relationship between those two groups is described by the Mumford-Tate conjecture as
follows, see \cite{Se81}.

\begin{conj}
 Let $k$ be a number field and fix a complex embedding of $k$. Let $X$ be a smooth projective
variety over $k$.

Let $G_{\ell}$ be the algebraic monodromy group associated to the \'etale cohomology group
$H^i(X_{\C}, \Q_{\ell})$ for some prime number $\ell$, and let $G_{\ell}^{\circ}$ be
its neutral component. Then there is a canonical isomorphism
$$G_{\ell}^{\circ}\simeq MT(H^i(X_{\C}, \Q))_{\Q_{\ell}}.$$
\end{conj}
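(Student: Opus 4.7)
The plan is to prove the two inclusions between $G_\ell^\circ$ and $MT(H^i(X_\C,\Q))_{\Q_\ell}$ separately, keeping in mind that this statement is the Mumford-Tate conjecture, open in general, so the ``proof'' is really a road map unconditional in one direction only. Both sides are by construction connected algebraic subgroups of $GL(H^i(\overline X,\Q_\ell))$, so the problem is a comparison of algebraic subgroups after base change via the Artin comparison isomorphism.

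For the inclusion $G_\ell^\circ \subseteq MT(H^i(X_\C,\Q))_{\Q_\ell}$ I would exploit the tensorial characterization of the Mumford-Tate group recalled just above: $MT$ is the stabilizer in $GL(H_\C)$ of every Hodge tensor lying in a mixed tensor space $H^{\otimes i}\otimes (H^*)^{\otimes j}$. Each such tensor has an $\ell$-adic avatar, and the task reduces to showing that, after replacing $k$ by a finite extension, that avatar is $G_k$-invariant; the image of the Galois representation then lands in the common stabilizer, forcing $G_\ell^\circ \subseteq MT_{\Q_\ell}$. The required Galois-invariance is precisely Deligne's theorem on absolute Hodge classes \cite{DMOS82} in the abelian-variety case; for motives of abelian type, in particular $K3$ surfaces through the Kuga-Satake construction, one reduces to that case, but outside such classes this inclusion remains conjectural.

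For the reverse inclusion $MT(H^i(X_\C,\Q))_{\Q_\ell} \subseteq G_\ell^\circ$ the strategy is dual: produce enough Galois-invariant tensors so that their common stabilizer is already contained in $MT_{\Q_\ell}$. Equivalently one must show that Galois-invariant classes are Hodge classes, a form of the Tate conjecture for powers of $X$. For $K3$ surfaces one may invoke the Tankeev-Zarhin description of $MT$ discussed in the body of the paper, combined with Faltings' theorem applied to the associated Kuga-Satake abelian variety to transport Tate classes back. For abelian varieties themselves this is Faltings' theorem on Tate classes. Beyond these classes no method is available.

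The main obstacle is, of course, this second inclusion. The fundamental difficulty is that there is no known mechanism for producing Galois-fixed classes in $\ell$-adic cohomology beyond those one already sees via algebraicity; any general argument would entail the Tate conjecture for arbitrary smooth projective varieties, which is far out of reach. This is why the Mumford-Tate conjecture remains known only in restricted settings such as abelian varieties, $K3$ surfaces, and a handful of further cases, and why the present paper contents itself with stating it as a conjecture and using it only in the form already established by Tankeev and Zarhin for $K3$s.
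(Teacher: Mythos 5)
This statement is the Mumford--Tate conjecture: the paper states it as a conjecture and offers no proof, only the citation of Tankeev's theorem (Theorem \ref{MT}, from \cite{Ta90}, \cite{Ta95}) establishing the special case actually used, namely $H^2$ of $K3$ surfaces over number fields. Your submission is therefore correctly a road map rather than a proof, and you are explicit about that, so there is no hidden gap to flag: the two-inclusion strategy you describe (cut out $MT$ by its Hodge tensors and show the Galois image stabilizes their $\ell$-adic avatars after a finite extension of $k$; conversely produce enough Galois-invariant tensors that are Hodge, i.e.\ a Tate-type statement for powers of $X$) is the standard framework, and your assessment of which pieces are known --- Deligne's absolute Hodge theorem for abelian varieties and, via Kuga--Satake, for $K3$ surfaces (the paper itself invokes absoluteness of Hodge cycles on products of $K3$s from \cite{DMOS82}); Faltings for Tate classes on abelian varieties --- matches how these cases are actually proved. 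One small precision worth adding: even the ``easy'' inclusion $G_{\ell}^{\circ}\subseteq MT(H^i(X_{\C},\Q))_{\Q_{\ell}}$ is not unconditional for a general smooth projective variety, since it requires knowing that the $\ell$-adic components of Hodge tensors are fixed by an open subgroup of $G_k$, which is exactly the absolute Hodge (or ``motivated'') property; you say as much, but it is the reason the paper is careful to restrict all of its actual arguments to the $K3$ case where Tankeev and Zarhin have settled everything. In short: nothing to correct, but also nothing here that could be, or is claimed to be, a proof, consistent with the paper treating the statement purely as a conjecture.
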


The Mumford-Tate conjecture is implied by the conjunction of the Tate and Hodge conjectures. A
lot of work has been done in its direction in the case of abelian varieties, see for instance
\cite{Se68}, \cite{Pi98}, \cite{Va08}. 

In this paper, we
will focus on the case of K3 surfaces, where the Mumford-Tate conjecture holds. However, an
important part of our method concerning specialization of N\'eron-Severi groups holds in a
general setting if one assumes the Mumford-Tate conjecture.

\subsection{Mumford-Tate groups and algebraic monodromy groups of K3 surfaces}

The following result is due to Tankeev and is crucial to this
paper.

\begin{thm}[Tankeev, \cite{Ta90}, \cite{Ta95}]\label{MT}
 The Mumford-Tate conjecture holds for the second cohomology group of $K3$ surfaces over number
fields.
\end{thm}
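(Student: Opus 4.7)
The plan is to verify both inclusions between the neutral component $G_\ell^\circ$ of the algebraic monodromy group and $MT(H^2(X_{\C}, \Q))_{\Q_\ell}$. One direction is general and follows from absolute Hodgeness; the other is specific to $K3$ surfaces and passes through the Kuga--Satake construction.

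For the inclusion $G_\ell^\circ \subseteq MT(H^2)_{\Q_\ell}$, I would use the fact that Hodge classes on arbitrary self-products of $K3$ surfaces are absolutely Hodge. For abelian varieties this is Deligne's theorem; for $K3$ surfaces it follows via the Kuga--Satake construction, which realizes $H^2(X)$ inside tensor constructions on $H^1$ of an abelian variety, or more directly via André's theory of motivated cycles. Combined with the Chevalley-style characterization of the Mumford--Tate group as the simultaneous stabilizer of all Hodge lines in tensor constructions on $H^2$, and the fact that absolutely Hodge classes are Galois-invariant up to finite index, this forces $G_\ell^\circ$ to lie inside $MT(H^2)_{\Q_\ell}$.

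For the reverse inclusion, decompose $H^2(X_{\C}, \Q) = NS(X_{\C})_{\Q} \oplus T$ as a direct sum of Hodge substructures. After replacing $k$ by a finite extension over which all Néron--Severi classes are defined, both groups act trivially on the algebraic factor, and the content reduces to the analogous statement for the transcendental lattice $T$. By Zarhin's theorem, the Hodge endomorphism algebra $E$ of $T$ is totally real or CM, and $MT(T)$ is the largest subgroup of $GL(T)$ commuting with $E$ and preserving the cup-product form. The task is then to show that the restriction of $G_\ell^\circ$ to $T_{\Q_\ell}$ is dense in this group.

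The main obstacle, which is the heart of Tankeev's argument, is this last density statement. I would realize it by passing to the Kuga--Satake abelian variety $A=KS(X)$: its rational $H^1$ carries a Hodge structure whose Mumford--Tate group is, up to central isogeny, determined by $MT(T)$, and one has an inclusion of rational Hodge structures $H^2(X, \Q)_{\mathrm{prim}} \hookrightarrow \mathrm{End}(H^1(A, \Q))$ that is compatible with Galois after enlarging $k$. This reduces the Mumford--Tate conjecture for $H^2(X)$ to the same conjecture for $H^1(A, \Q_\ell)$. The Kuga--Satake variety has a very constrained endomorphism structure (dictated by the Clifford algebra of $T$ together with $E$), and in each of Zarhin's cases the Mumford--Tate group of $A$ is, up to central isogeny, a classical group of a type for which the Mumford--Tate conjecture is known via the Faltings, Serre and Pink results on $\ell$-adic representations of abelian varieties. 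Transporting the resulting equality back along the Kuga--Satake embedding supplies the missing inclusion on $T$, completing the identification $G_\ell^\circ \simeq MT(H^2(X_{\C}, \Q))_{\Q_\ell}$.
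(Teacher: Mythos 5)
First, a point of order: the paper does not prove this statement at all --- it is imported from Tankeev's papers \cite{Ta90}, \cite{Ta95} and used as a black box, so there is no internal proof to compare yours against. Judged on its own terms, your sketch correctly identifies the architecture of the actual proofs in the literature (Tankeev, and later Andr\'e): the inclusion $G_\ell^\circ \subseteq MT(H^2)_{\Q_\ell}$ via absolute Hodge (or motivated) cycles on powers of $X$, the reduction to the transcendental lattice $T$, Zarhin's determination of $MT(T)$ as the centralizer of $E$ in $GO(T,\psi)$, and the Kuga--Satake correspondence as the bridge to abelian varieties, where Faltings' theorems become available. The Galois-compatibility of the Kuga--Satake embedding after a finite extension of $k$, which you assert in passing, is itself a substantial theorem (Deligne, Andr\'e), but it is fair to quote it in a sketch.

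The genuine gap is in your last step. You reduce the Mumford--Tate conjecture for $H^2(X)$ to the Mumford--Tate conjecture for $H^1(A)$ of the Kuga--Satake variety and then assert that the latter is ``known via the Faltings, Serre and Pink results.'' It is not. The Mumford--Tate conjecture for abelian varieties is open in general, and the unconditional cases (Serre and Pink for $\mathrm{End}(A)=\Z$ outside a sparse set of dimensions, the CM case, low dimension, and so on) do not cover Kuga--Satake varieties, which have dimension $2^{\dim T-2}$ and carry the full even Clifford algebra $C^{+}(T)$ inside their endomorphism algebra; indeed, the usual route to the Mumford--Tate conjecture for such abelian varieties is \emph{through} the K3 result, so citing it here would be circular. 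The actual arguments run the reduction in the opposite direction: one uses only the weaker, known facts about $H^1(A,\Q_\ell)$ --- Faltings' semisimplicity and the Tate conjecture for endomorphisms --- transports them through the Kuga--Satake correspondence to show that the commutant of $G_\ell$ in $\mathrm{End}(T_\ell)$ is exactly $E\otimes\Q_\ell$, and then proves the $\ell$-adic analogue of Zarhin's theorem directly: a connected reductive subgroup of the centralizer of $E\otimes\Q_\ell$ in $SO(T_\ell,\psi_\ell)$ with that exact commutant and containing a suitable cocharacter (Hodge cocharacter or Frobenius torus) must be the full centralizer. That classification step, not an appeal to the Mumford--Tate conjecture for $A$, is the heart of Tankeev's papers, and your sketch does not supply it.
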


This result allows for a Hodge-theoretic description of the Galois action on the second
cohomology group of a $K3$ surface.

\bigskip

Let us now recall the description due to Zarhin in \cite{Za83} of the Mumford-Tate group of a
$K3$ surface.
Let $X$ be a $K3$ surface over $\C$, and consider the singular cohomology $H=H^2(X, \Q)$
endowed with its weight $2$ Hodge structure. The Hodge structure $H$ splits as a direct sum
$$H=NS(X)\oplus T,$$
where $NS(X)$ is the N\'eron-Severi group of $X$ with rational coefficients, and $T$ is the
orthogonal of $NS(X)$ in $H$ with respect to the cup-product. The Hodge structure $T$ is
called the transcendental part of $H^2(X, \Q)$.

The Hodge structure $T$ is
simple. By Lefschetz's theorem on $(1,1)$ classes, $T$ is the smallest sub-Hodge structure of
$H$ such that $T\otimes\C$ contains $H^2(X, \mathcal O_X)$. By the Hodge index theorem,
cup-product on $H^2(X, \Q)$ restricts to a polarization $\psi: T\otimes T\ra \Q$ on $T$.

Since $NS(X)$ is spanned by Hodge classes, the Mumford-Tate group of $H$ acts by a character on
$NS(X)$ and identifies with the Mumford-Tate group of $T$. Since $T$ is polarized by $\psi$,
$MT(T)$ is contained in the group of orthogonal similitudes $GO(T, \psi)$.

\bigskip

Let $E$ be the algebra of endomorphisms of the Hodge structure $T$. In \cite{Za83}, Zarhin
proves
that $E$ is either a totally real field or a CM field. The field $E$ is equipped with an
involution induced by the polarization on $T$, which is either the identity if $E$ is totally
real or complex conjugation in case $E$ is CM.

Since $E$ consists of endomorphisms of Hodge structures, the Mumford-Tate group of $T$
commutes with $E$. By the discussion above, the Mumford-Tate group of $T$ is a subgroup of the
centralizer of $E$ in the group $GO(T, \psi)$.

\begin{thm}[Zarhin, \cite{Za83}]\label{MTK3}
The Mumford-Tate group of $T$ is the centralizer of $E$ in the group of orthogonal similitudes
$GO(T, \psi)$.
\end{thm}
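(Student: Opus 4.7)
The inclusion $MT(T)\subseteq C$, where $C$ denotes the centralizer of $E$ in $GO(T,\psi)$, was noted in the paragraph preceding the theorem; the plan is to establish the reverse containment. Since $MT(T)$ is connected by construction, everything reduces to showing $C^\circ\subseteq MT(T)$, with a separate check on component groups then giving the full statement.

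I would use the Chevalley description already invoked earlier in the paper: $MT(T)$ is the largest $\Q$-algebraic subgroup of $GL(T)$ that leaves every Hodge line in every mixed tensor space $T^{\otimes a}\otimes(T^*)^{\otimes b}$ globally invariant. Because $MT(T)\subseteq C$, it would then suffice to show that every such Hodge line is already $C$-stable.

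To carry this out I would decompose $T\otimes_\Q\C=\bigoplus_{\sigma}T_\sigma$, with $\sigma$ running over embeddings $E\hookrightarrow\C$ and $E$ acting on $T_\sigma$ through $\sigma$. Since elements of $E$ are Hodge endomorphisms, this decomposition is respected by the Hodge cocharacter $h\colon\mathbb S\to GL(T_\R)$, so that each $T_\sigma$ inherits a complex weight-two Hodge structure. The constraint $\dim T^{2,0}=1$ forces exactly one distinguished embedding $\sigma_0$ to carry a nonzero $(2,0)$-part: in the totally real case every other $T_\sigma$ is then purely of type $(1,1)$; in the CM case the nonzero $(2,0)$ and $(0,2)$ pieces are distributed between $\sigma_0$ and its complex conjugate while all remaining factors stay of type $(1,1)$. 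A bidegree computation, using $\psi$ and the $E$-action to produce $C$-stable lines in the tensor algebra, then identifies every Hodge line with one spanned by a tensor built from $\psi$ and $E$-multiplications.

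The main obstacle will be the step of showing that the Hodge cocharacter is generic enough inside $C$. Concretely, on the distinguished factor $T_{\sigma_0}$ one must prove that the Zariski closure of the image of $h$ fills out the full orthogonal similitude group of that factor---a statement in the spirit of the classical fact that a very general polarized Hodge structure of K3 type has maximal Mumford-Tate group---and then propagate this to the remaining factors by exploiting the Galois action on the embeddings $\sigma$ built into the $\Q$-structure of $C$. The totally real versus CM dichotomy enters here through the shape of $\psi$ on each factor, symmetric bilinear versus Hermitian, dictating which centralizer is ultimately filled out; this structural split is also what underlies the dichotomy in Theorem \ref{jump}.
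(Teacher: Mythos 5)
The paper does not prove this statement: Theorem \ref{MTK3} is imported verbatim from Zarhin \cite{Za83}, so there is no internal proof to compare against, and your proposal has to be measured against Zarhin's actual argument. Your setup is consistent with his: the decomposition of $T\otimes\C$ over the embeddings of $E$, the distinguished embedding $\sigma_0$ carrying the $(2,0)$-line, and the totally real/CM dichotomy governing whether the relevant form is the $E$-bilinear $\phi$ with $\psi=Tr_{E/\Q}(\phi)$ or a Hermitian form are all correct and are the first steps of \cite{Za83}. The inclusion $MT(T)\subseteq C$ is indeed the easy direction already noted in the paper.

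The gap is in the decisive step. First, the reduction via Chevalley's criterion does not actually reduce anything: showing that every Hodge line in every $T^{\otimes a}\otimes(T^*)^{\otimes b}$ is $C$-stable is equivalent to showing that the Hodge tensors are exactly the $C$-invariant ones, which \emph{is} the theorem; your ``bidegree computation'' names the conclusion rather than proving it. Second, the one concrete claim you make about the crux --- that ``the Zariski closure of the image of $h$ fills out the full orthogonal similitude group of $T_{\sigma_0}$'' --- is false as stated: the Zariski closure of $h(\mathbb S)$ is a torus of dimension at most $2$. What must be shown is that the smallest $\Q$-algebraic subgroup whose real points contain $h(\mathbb S)$ is all of $C$, and the appeal to very general K3-type Hodge structures having maximal Mumford--Tate group does not apply, since $T$ here is an arbitrary, not generic, member of its family (it is constrained to admit $E$ as endomorphisms, and could a priori admit further Hodge tensors cutting out a proper reductive subgroup of $C$). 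The missing ingredient is Zarhin's Lie-algebra classification: $\mathrm{Lie}(MT(T))$ is a reductive $\Q$-subalgebra of the centralizer whose own centralizer in $\mathrm{End}_\Q(T)$ is exactly $E$ (this is where simplicity of the Hodge structure $T$ enters), and it contains a semisimple element --- the derivative of the Hodge cocharacter --- with exactly one positive and one negative eigenvalue, each eigenspace one-dimensional because $\dim T^{2,0}=1$; these constraints force the subalgebra to be the full $\mathfrak{so}_E(T,\phi)$, resp.\ the full unitary algebra, in the totally real, resp.\ CM, case. Without an argument of this kind the reverse inclusion does not close.
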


\bigskip

Now keep the same notation, and assume $X$ can be defined over a number field $k$. Fix a prime
number $\ell$. The action of the absolute Galois group $G_k$ on $H^2(X, \Q_{\ell})$ leaves the
$\Q_{\ell}$-span of the N\'eron-Severi group of $X$ globally invariant, as well as its
orthogonal
$T_{\ell}=T\otimes \Q_{\ell}$. As above, the neutral component of the algebraic monodromy group
$G_{\ell}$
of $H^2(X, \Q_{\ell})$ identifies with the algebraic monodromy group of $T_{\ell}$.

The polarization $\psi$ on $T$ extends to a symmetric bilinear form $\psi_{\ell}$. The
representation of $G_k$ in the automorphism group of $T_{\ell}$ factors through the
group $GO(T_{\ell}, \psi_{\ell})$.

Since Hodge cycles on products of $K3$ surfaces are absolute Hodge, see \cite{DMOS82}, the
field $E$
corresponding to endomorphisms of the Hodge structure $T$ acts on $T_{\ell}$ and commutes with
a finite-index subgroup of $G_k$. As a consequence, the neutral component of $G_{\ell}$
commutes with the action of $E\otimes\Q_{\ell}$.

By Theorem \ref{MT}, the Mumford-Tate conjecture holds for $X$. As an immediate corollary of
Theorem \ref{MTK3}, we get the following description of the neutral component of the algebraic
monodromy group of $X$.

\begin{cor}\label{algK3}
 With notations as above, the neutral component of the algebraic monodromy group associated to
$T_{\ell}$ is the centralizer of $E\otimes \Q_{\ell}$ in the group of orthogonal similitudes
$GO(T_{\ell},
\psi_{\ell})$.
\end{cor}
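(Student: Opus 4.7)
The plan is to combine Tankeev's theorem (Theorem \ref{MT}) with Zarhin's explicit description (Theorem \ref{MTK3}), using the fact that, on both the Betti and the $\ell$-adic side, the action on $H^2$ splits as a character times the action on the transcendental part.

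First, by Theorem \ref{MT} the Mumford-Tate conjecture holds, so under the comparison isomorphism $H^2(X_\C,\Q)\otimes\Q_\ell\simeq H^2(\overline X,\Q_\ell)$ one has
$$G_\ell^\circ\simeq MT(H^2(X_\C,\Q))_{\Q_\ell}$$
as algebraic subgroups of $GL(H^2(\overline X,\Q_\ell))$. Second, I would pass from $H^2$ to $T$. On the Hodge side, every class in $NS(X_\C)_{\Q}$ is a Hodge class, so $MT(H^2(X_\C,\Q))$ acts on $NS(X_\C)_{\Q}$ through its similitude character and preserves the orthogonal decomposition $H^2(X_\C,\Q)=NS(X_\C)_{\Q}\oplus T$; its restriction to $T$ therefore coincides with $MT(T)$. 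On the $\ell$-adic side, the text before the statement already records that $G_\ell$ preserves the decomposition $H^2(\overline X,\Q_\ell)=NS(X_\C)_{\Q_\ell}\oplus T_\ell$ and that, up to a finite extension of $k$, it acts on $NS(X_\C)_{\Q_\ell}$ through a character, so the projection of $G_\ell^\circ$ to $GL(T_\ell)$ identifies it with the neutral component of the algebraic monodromy group on $T_\ell$. Combining these two restrictions with the isomorphism above, I get
$$(G_\ell|_{T_\ell})^\circ \;\simeq\; MT(T)_{\Q_\ell}$$
as subgroups of $GL(T_\ell)$, both landing in $GO(T_\ell,\psi_\ell)$.

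Third, I would invoke Theorem \ref{MTK3} to replace $MT(T)$ by the centralizer $Z_{GO(T,\psi)}(E)$. Since this centralizer is defined by polynomial equations (commutation with a finite generating set of $E$ as a $\Q$-algebra) inside the $\Q$-algebraic group $GO(T,\psi)$, it commutes with the flat base change $\Q\hookrightarrow\Q_\ell$, giving
$$MT(T)_{\Q_\ell}\;=\;Z_{GO(T_\ell,\psi_\ell)}(E\otimes\Q_\ell),$$
which is the required identification.

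The only non-formal input is the combination of Theorems \ref{MT} and \ref{MTK3}; everything else is bookkeeping. The single point that deserves care is the second step, namely checking that restricting the Mumford-Tate group of $H^2$ to $T$ really yields $MT(T)$ and that the analogous restriction of $G_\ell^\circ$ matches the algebraic monodromy group of $T_\ell$. This follows from the splitting of $H^2$ into Hodge substructures (so the central similitude character accounts for the $NS$-factor) and from the observation in the preceding paragraph that $E\otimes\Q_\ell$ commutes with $G_\ell^\circ$, which forces the decomposition to be preserved on the nose. After that, the commutation of centralizers with extension of scalars is automatic.
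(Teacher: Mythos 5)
Your proposal is correct and follows essentially the same route as the paper, which deduces the corollary immediately from Theorems \ref{MT} and \ref{MTK3} together with the preceding discussion identifying the monodromy group of $H^2$ restricted to $T_\ell$ with that of $T_\ell$ and the Mumford--Tate group of $H^2$ restricted to $T$ with $MT(T)$. Your additional remark that formation of the centralizer commutes with the base change $\Q\hookrightarrow\Q_\ell$ is a point the paper leaves implicit, but it is handled correctly.
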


\section{Picard numbers of specializations}

This section is devoted to the proof of Theorem \ref{jump}. We start by the following result
which encompasses the elementary linear algebra needed in Theorem \ref{jump}.

Let $T$ be a finite dimensional vector space endowed with a symmetric bilinear form $\psi$. If
$f$ is any linear endomorphism of $T$, let $f'$ be the adjoint of $f$ with respect to
$\psi$. 

Let $E$ be a number field acting on $T$. Assume that $E$ is stable under $e\mapsto e'$, and
that $E$ is either a totally real field with $e=e'$ for all $e\in E$, or a CM field such that
$e\mapsto e'$ acts as complex conjugation on $E$.

Let $H$ be the centralizer of $E$ in the special orthogonal group $SO(T, \psi)$. Let
$\ell$ be a prime number, and let $H_{\ell}=H\otimes\Q_{\ell}$. 
\begin{prop}\label{eigenvalues}
The following holds.
\begin{enumerate}
 \item If $E$ is a CM field or the dimension of $T$ as an $E$-vector space is even, then there
exists $h\in H_{\ell}$ such that $h$ does not have any root of unity as an
eigenvalue.

  \item If $E$ is a totally real field and the dimension of $T$ as an $E$-vector space is
odd, then the eigenspace of any $h\in H_{\ell}$ associated to the eigenvalue $1$ is
of dimension at least $[E:\Q]$. Furthermore, there exists $h\in H_{\ell}$ for which
this dimension is exactly $[E:\Q]$ and such that no root of unity different from $1$ appears as
an eigenvalue of $h$.
\end{enumerate}
\end{prop}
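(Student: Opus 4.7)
The plan is to reduce the statement to the linear algebra of orthogonal/unitary groups over $E$, then descend from $\overline{\Q_\ell}$-points to $\Q_\ell$-points. First, I would reinterpret $H$ in terms of a form on $T$ viewed as an $E$-module. Since $\psi(ex, y) = \psi(x, e'y)$, the non-degeneracy of the trace pairing $E \times E \to \Q$ gives a unique form $\phi : T \times T \to E$ with $\psi = \mathrm{Tr}_{E/\Q} \circ \phi$; it is $E$-bilinear symmetric when $E$ is totally real, and Hermitian (relative to the involution) when $E$ is CM. An $E$-linear endomorphism of $T$ preserves $\psi$ if and only if it preserves $\phi$, so the identity component $H^\circ$ is identified with $SO_E(T, \phi)$ in the totally real case and with a form of $SU$ in the CM case.

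Next, I would base-change to $\overline{\Q_\ell}$. Writing $E \otimes_\Q \overline{\Q_\ell} \cong \overline{\Q_\ell}^d$ with components indexed by the embeddings $\sigma : E \hookrightarrow \overline{\Q_\ell}$, we get an orthogonal splitting
\[
T \otimes \overline{\Q_\ell} \;=\; \bigoplus_{\sigma} T_\sigma,\qquad \dim_{\overline{\Q_\ell}} T_\sigma = n.
\]
In the totally real case the involution fixes each $\sigma$, $\phi_\sigma$ is a non-degenerate symmetric form on $T_\sigma$, and $H^\circ_{\overline{\Q_\ell}} \cong \prod_\sigma SO(T_\sigma, \phi_\sigma) \cong (SO_n)^d$. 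In the CM case the involution pairs $\sigma$ with $\bar\sigma$ and $(T_\sigma, T_{\bar\sigma})$ is put in duality by $\phi$, giving $H^\circ_{\overline{\Q_\ell}} \cong \prod_{\{\sigma,\bar\sigma\}} SL(T_\sigma) \cong (SL_n)^{d/2}$.

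With this decomposition, the eigenvalue analysis is transparent. For part~(1), in the CM case each factor is $SL_n$ and contains regular semisimple elements $\mathrm{diag}(\lambda_1,\ldots,\lambda_n)$ with $\prod \lambda_i = 1$ and the $\lambda_i$ chosen to avoid all roots of unity; in the totally real case with $n$ even each factor is $SO_n$ with $n$ even, and a product of $n/2$ elementary rotations with generic angles furnishes an element with no root-of-unity eigenvalue. For part~(2), $n$ is odd, so a dimension count on each $SO_n$-factor shows that every element of $SO_n(\overline{\Q_\ell})$ has $1$ as an eigenvalue with multiplicity at least one (the characteristic polynomial has odd degree, the non-real eigenvalues come in inverse pairs, and the number of eigenvalues equal to $-1$ is even since $\det = 1$). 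This gives the lower bound $[E:\Q]=d$ on the $1$-eigenspace for every $h$ in $H^\circ_{\overline{\Q_\ell}}$. A regular semisimple element in each factor has $1$-eigenspace of dimension exactly one, so one can realise an $h$ whose $1$-eigenspace has dimension exactly $d$ and whose other eigenvalues are generic, in particular not roots of unity.

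Finally, I need to produce such an element in $H_\ell = H \otimes \Q_\ell$ rather than only over $\overline{\Q_\ell}$. The subsets of $H^\circ$ cut out by ``$\zeta$ is an eigenvalue'' (for $\zeta$ a root of unity) and the non-regular-semisimple locus are proper Zariski-closed subvarieties, hence their $\Q_\ell$-points are nowhere dense in $H^\circ(\Q_\ell)$ for the $\ell$-adic topology. Since $\Q_\ell$ is a Baire space, Baire's theorem applied to this countable family of closed subsets produces an $h \in H^\circ(\Q_\ell) \subset H_\ell$ with the desired property. I expect this descent step, together with keeping track of which eigenvalue constraints are closed conditions on $H^\circ$, to be the main technical obstacle; the eigenvalue count on $SO_n$ for odd $n$ that supplies the lower bound in part~(2) is the key structural input but is elementary once the decomposition above is in place.
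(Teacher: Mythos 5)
Your overall strategy -- the trace form $\phi$ with $\psi=\mathrm{Tr}_{E/\Q}\circ\phi$, the identification of the centralizer with a Weil restriction, and the parity argument showing that every element of $SO_n$ with $n$ odd has $1$ as an eigenvalue on each of the $[E:\Q]$ factors -- is exactly the paper's. Where you diverge is in producing the witness element: the paper writes one down explicitly over $E$ (an infinite-order element of $SO_2(E,\phi)$ block by block in the even totally real case; multiplication by $e\in E$ with $ee'=1$ and $e$ not a root of unity in the CM case, which exists because such $e$ need not be an algebraic integer, so Kronecker's theorem does not force it to be a root of unity), whereas you construct a generic element over $\overline{\Q_\ell}$ and descend to $\Q_\ell$-points by Zariski density of $H^\circ(\Q_\ell)$ plus Baire category. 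Your descent is workable (each condition ``$\zeta$ is an eigenvalue'' is a proper closed condition, and $H^\circ(\Q_\ell)$ is a Baire space in which the $\Q_\ell$-points of a proper closed subvariety are nowhere dense by smoothness), but it is genuinely needed in your setup because over $\Q_\ell$ one must avoid a countably infinite family of roots of unity; the paper's explicit elements make this machinery unnecessary.

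There is one genuine error: in the CM case the centralizer of $E$ over $\overline{\Q_\ell}$ is $\prod_{\{\sigma,\bar\sigma\}} GL(T_\sigma)$, not $\prod_{\{\sigma,\bar\sigma\}} SL(T_\sigma)$. The Hermitian form puts $T_\sigma$ and $T_{\bar\sigma}$ in duality, which forces $\det g_{\bar\sigma}=(\det g_\sigma)^{-1}$ and hence total determinant $1$ automatically, with no constraint on the individual $\det g_\sigma$. This is not merely cosmetic: with $SL(T_\sigma)$ your argument produces nothing when $\dim_E T=1$, since $SL_1$ is trivial and every eigenvalue would be $1$. That case is far from vacuous -- it occurs for singular $K3$ surfaces, where $T$ has rank $2$ and $E$ is imaginary quadratic, and it is covered by part (1) of the statement. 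Once you replace $SL$ by $GL$ (or simply use the paper's element: multiplication by a non-root-of-unity $e$ with $ee'=1$, whose eigenvalues are the conjugates $\sigma(e)$ and hence not roots of unity), the CM case goes through for all $\dim_E T\geq 1$. The totally real parts of your argument are sound as written.
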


\begin{proof}

Let us first assume that $E$ is a totally real field. By \cite{Za83}, 2.1, there
exists a unique $E$-bilinear form $\phi : T\times T\ra E$ such that $\psi=Tr_{E/\Q}(\phi)$.
With this notation,the centralizer of $E$ in $SO(T, \psi)$ is equal, as a subgroup of $GL(T)$,
to the Weil restriction $Res_{E/\Q}(SO_E(T, \phi))$, where $SO_E(T, \phi)$ denotes the group of
orthogonal similitudes of the $E$-vector space $T$ with respect to $\phi$.

\bigskip
Assume furthermore that the dimension of $T$ as a vector space over $E$ is even, and let us
show that there is an element $h\in H_{\ell}$ such that $g$ does not have any root
of unity as an eigenvalue. 

Considering an orthogonal decomposition of $T$ as an $E$-vector
space endowed with the bilinear form $\phi$, we can assume $T$ is of dimension $2$ over $E$.
Let $h$ be an orthogonal automorphism of the $E$-vector space $T$ of determinant $1$ that is
not of finite order. Then $h$ corresponds to an element of $H_{\ell}$ with the
desired property.

\bigskip

Now if the dimension of $T$ as a vector space over $E$ is odd, recall that any
element of $SO_E(T, \psi)$ admits $1$ as an eigenvalue. It follows from the description
of $H_{\ell}$ as a Weil restriction that any $h\in H_{\ell}$ has $1$ as an eigenvalue,
and that the corresponding eigenspace is invariant under the action of $E$. As a consequence,
its dimension is at least $[E:\Q]$. One can then argue as in the previous paragraph to
conclude the proof of the theorem in this case.

\bigskip

Let us now assume that $E$ is a CM field. Let $e$ be an element of $E$ such that $ee'=1$ and
$e$ is not a root of unity. Then multiplication by $e$ on $T$ corresponds to an element of
$H_{\ell}$ as in the theorem.
\end{proof}

We now turn to the proof of Theorem \ref{jump}. From now on, we use the notations there. Let
us start with a straightforward lemma. 

\begin{lem}\label{SO}
 The neutral component of the algebraic monodromy group associated to
$T_{\ell}(1)$ is the centralizer of $E\otimes \Q_{\ell}$ in the special orthogonal group
$SO(T_{\ell},
\psi_{\ell})$.
\end{lem}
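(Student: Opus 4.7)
The plan is to deduce the lemma from Corollary \ref{algK3} by carefully tracking the effect of the Tate twist on the algebraic monodromy group. First I would isolate the Galois weight of the orthogonal pairing: since the cup product factors through $H^4(\overline{X}, \Q_\ell) \cong \Q_\ell(-2)$, the form $\psi_\ell$ is equivariant of weight $-2$, so $\nu \circ \rho_\ell = \chi_\ell^{-2}$ on $G_k$, where $\nu$ is the similitude character on $GO(T_\ell, \psi_\ell)$ and $\chi_\ell$ is the cyclotomic character. After Tate twisting, $\rho_\ell^{(1)}(\sigma) = \chi_\ell(\sigma)\rho_\ell(\sigma)$ satisfies $\nu \circ \rho_\ell^{(1)} = 1$, so the Zariski closure of the image of $\rho_\ell^{(1)}$ lies in $O(T_\ell, \psi_\ell)$ and its neutral component lies in $SO(T_\ell, \psi_\ell)$. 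Multiplication by a scalar preserves the commutation with $E$, so this neutral component is already contained in the centralizer of $E \otimes \Q_\ell$ in $SO(T_\ell, \psi_\ell)$.

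For the reverse inclusion I would use a dimension count via the projection $\pi: GL(T_\ell) \to PGL(T_\ell)$ killing scalars. Since $\rho_\ell$ and $\rho_\ell^{(1)}$ differ only by the scalar character $\chi_\ell$, their Galois images coincide in $PGL(T_\ell)$, so the Zariski closures satisfy $\pi(G_\ell) = \pi(G_\ell^{(1)})$ and in particular $\pi(G_\ell^\circ) = \pi((G_\ell^{(1)})^\circ)$. By Corollary \ref{algK3}, $G_\ell^\circ = Z_{GO}(E\otimes\Q_\ell)^\circ$ contains the full scalar $\mathbb{G}_m$ (scalars commute with $E$ and are connected), so $\dim \pi(G_\ell^\circ) = \dim Z_{GO}(E\otimes\Q_\ell) - 1$. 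Meanwhile $(G_\ell^{(1)})^\circ$ is contained in $SO(T_\ell, \psi_\ell)$, whose intersection with the scalars is finite (roots of unity of order dividing $\dim T$), so $\dim \pi((G_\ell^{(1)})^\circ) = \dim (G_\ell^{(1)})^\circ$. Combining, $\dim (G_\ell^{(1)})^\circ = \dim Z_{GO}(E\otimes\Q_\ell) - 1 = \dim Z_{SO}(E\otimes\Q_\ell)$, and together with the containment from the first paragraph this forces equality with the (neutral component of the) centralizer.

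The statement is essentially a bookkeeping consequence of Corollary \ref{algK3}, and there is no serious obstacle in the argument. The only conceptual point worth highlighting is the dichotomy between $G_\ell^\circ$ containing the full central $\mathbb{G}_m$ while $(G_\ell^{(1)})^\circ$ meets it only in a finite subgroup; this is precisely what accounts for the unit drop in dimension that takes us from $GO$ to $SO$ when one twists by $\chi_\ell$.
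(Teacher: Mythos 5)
Your proof is correct and follows essentially the same route as the paper: the paper likewise observes that the Tate twist makes the Galois representation land in $O(T_{\ell},\psi_{\ell})$ and then deduces the result from Corollary \ref{algK3} together with the fact that $SO$ is the neutral component of $O$. Your version is more detailed than the paper's one-line argument — in particular, the $PGL$ dimension count supplying the reverse inclusion (via the dichotomy between $G_{\ell}^{\circ}$ containing the central $\mathbb{G}_m$ and $(G_{\ell}^{(1)})^{\circ}$ meeting it in a finite group) makes explicit a step the paper leaves implicit.
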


\begin{proof}
 The representation of $G_k$ on $T_{\ell}(1)$ is equal to the representation of $G_k$ on
$T_{\ell}$
twisted by the cyclotomic character. On the other hands, general properties of \'etale
cohomology show that $G_k$ acts on $T_{\ell}(1)$ through the orthogonal group $O(T_{\ell},
\psi_{\ell})$. 

The lemma then follows from Corollary \ref{algK3} and the fact that the special
orthogonal group is the neutral component of the orthogonal group.
\end{proof}

\begin{proof}[Proof of Theorem \ref{jump}]
We use the notations of the theorem. First note that since specialization of N\'eron-S\'everi
groups is injective, the inequality $\rho_{\mathfrak p}\geq \rho$ always holds.

Let $F_{\mathfrak p}$ be the geometric Frobenius at $\mathfrak p$ acting on the \'etale
cohomology group $H^2(\overline{X_{\mathfrak p}}, \Q_{\ell}(1))$, where $\ell$ is a prime
number prime
to $\mathfrak p$. By the smooth base change theorem, the group $H^2(\overline{X_{\mathfrak p}},
\Q_{\ell}(1))$ identifies with $H^2(\overline{X}, \Q_{\ell}(1))$, and $F_{\mathfrak p}$ leaves
both the N\'eron-Severi group and $T_{\ell}(1)$ globally invariant. 

Let $H$ be the centralizer of $E\otimes \Q_{\ell}$ in the special
orthogonal group $SO(T_{\ell}, \psi_{\ell})$. Let $n$ be the dimension of $T$ as a vector space
over $\Q$, and let $S$ be the finite set of complex roots of unity of degree at most $n$ over
$\Q$.

\bigskip

Assume first that $E$ is a CM field or $E$ is a totally real field and the dimension of $T$ as
a vector space over $E$ is even.  By Proposition \ref{eigenvalues}, the set of $h\in H_{\ell}$
such that $h$ does not have any eigenvalue in $S$ is a dense, Zariski-open subset of
$H_{\ell}$.

By Lemma \ref{SO} and Chebotarev's density theorem, we can find a finite extension $k'$ of
$k$ and a set $U$ of finite places $\mathfrak p$ of $k'$ that has density $1$ such
that for any $\mathfrak p\in U$, $X$ has good reduction at $\mathfrak p$ and the geometric
Frobenius $F_{\mathfrak p}$ acting on $T_{\ell}(1)$ does not have any eigenvalue in $S$. 

\medskip

Choose $U$ as above, and let $\mathfrak p$ be in $U$. By the Weil conjectures, the
characteristic polynomial of the geometric Frobenius $F_{\mathfrak p}$ has rational
coefficients. By definition of $S$, this implies that it does not have any eigenvalue that is a
root of unity. 

As a consequence, $F_{\mathfrak p}$ acting on the whole cohomology group $H^2(\overline{X},
\Q_{\ell}(1))$ admits $1$ as an eigenvalue of multiplicity $\rho$ and does not have any other
eigenvalue that is a root of unity. It follows that $\rho_{\mathfrak p}\leq \rho$, and finally
that $\rho_{\mathfrak p}= \rho$. This proves the first part of Theorem \ref{jump}.

\bigskip

Now assume that $E$ is a totally real field and that the dimension of $T$ as
a vector space over $E$ is odd. By Proposition \ref{eigenvalues}, every element of $H_{\ell}$
has $1$ as an eigenvalue with multiplicity at least $[E:\Q]$. By definition of the algebraic
monodromy group, if $\mathfrak p$ is a finite place of $k$, then some power of the geometric
Frobenius belongs to $H_{\ell}$. If $X_{\mathfrak p}$ satisfies the Tate conjecture, it follows
that $\rho_{\mathfrak p}\geq \rho + [E:\Q]$.

By Proposition \ref{eigenvalues} again, the set of $h\in H_{\ell}$ such that $h$ admits $1$ as
an eigenvalue of multiplicity $[E:\Q]$ and does not have any other eigenvalue in $S$ is a
dense, Zariski-open subset of $H_{\ell}$.

By Lemma \ref{SO} and Chebotarev's density theorem, we can find a finite extension $k'$ of
$k$ and a set $U$ of finite places $\mathfrak p$ of $k'$ that has density $1$ such
that for any $\mathfrak p\in U$, $X$ has good reduction at $\mathfrak p$ and the geometric
Frobenius $F_{\mathfrak p}$ acting on $T_{\ell}(1)$ admits $1$ as an eigenvalue of multiplicity
$[E:\Q]$ and does not have any other eigenvalue in $S$.

By work of Bogomolov and Zarhin in \cite{BZ09}, the set of finite places where $X$ has good,
ordinary reduction has density $1$ after some finite extension of $k$. As a consequence, we can
assume that $X$ has good, ordinary reduction at every place in $U$.

Choose $U$ as above, and let $\mathfrak p$ be in $U$. By \cite{Ny83}, $X_{\mathfrak p}$
satisfies the Tate conjecture. We can then argue as above to finish the proof of Theorem
\ref{jump}.
\end{proof}

\begin{rk}
 Using Frobenius tori as in \cite{Se81} and the fact that Frobenius tori are maximal tori of
the Mumford-Tate groups for infinitely many primes, one can work directly in the group
of orthogonal similitudes instead of reducing to the special orthogonal group as in Lemma
\ref{SO}.
\end{rk}

\section{Discriminants of N\'eron-Severi groups}

In this section, we discuss properties of the N\'eron-Severi lattices of
specializations of $K3$ surfaces. Once again, we use the notations of Theorem \ref{jump}.

\begin{prop}\label{disc}
 Assume that $E$ is a totally real field and that the dimension of $T$ over $E$ is odd. If
$\mathfrak p$ is a finite place of $k$ such that $X$ has good reduction at $\mathfrak p$,
denote by $\delta(\mathfrak p)\in \Q^*/(\Q^*)^2$ the discriminant of the lattice
$NS(\overline{X_{\mathfrak p}})$ with respect to the intersection product.

There exist infinitely many pairs $(\mathfrak p,\mathfrak q)$ of finite places of
$k$ such that

\begin{enumerate}
 \item $X$ has good, ordinary reduction at both $\mathfrak p$ and $\mathfrak q$.
 \item $\rho_{\mathfrak p}=\rho_{\mathfrak q}=\rho+[E:\Q].$
 \item $\delta(\mathfrak p)\neq \delta(\mathfrak q).$
\end{enumerate}
\end{prop}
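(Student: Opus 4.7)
The plan is to reduce the statement to the claim that a certain local square-class determined by the Frobenius at $\mathfrak p$ takes at least two values as $\mathfrak p$ varies, then invoke $\ell$-adic Chebotarev.

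First, I would apply Theorem \ref{jump} (2) together with the Bogomolov--Zarhin density-one result for ordinary primes to obtain, after enlarging $k$, a density-one set $U$ of places of good ordinary reduction with $\rho_{\mathfrak p}=\rho+[E:\Q]$; all these places satisfy conditions (1) and (2) automatically. For such $\mathfrak p$, put
\[ N_{\mathfrak p}:=NS(\overline{X_{\mathfrak p}})_{\Q}\cap NS(\overline{X})_{\Q}^{\perp}, \]
a $\Q$-vector space of dimension $[E:\Q]$ carrying the restriction of the intersection form, which is non-degenerate by the Hodge index theorem. Then $\delta(\mathfrak p)=\delta_{0}\cdot \delta(N_{\mathfrak p})\in\Q^{*}/(\Q^{*})^{2}$ with $\delta_{0}=\mathrm{disc}\,NS(\overline X)$ independent of $\mathfrak p$, so it is enough to exhibit two places of $U$ at which $\delta(N_{\mathfrak p})$ differs modulo squares.

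Second, I would identify the datum of $\delta(N_{\mathfrak p})$ $\ell$-adically at a fixed auxiliary prime $\ell$. By smooth base change and the Tate conjecture at ordinary primes \cite{Ny83}, $N_{\mathfrak p}\otimes \Q_{\ell}$ equals the $F_{\mathfrak p}$-invariants in $T_{\ell}(1)$. The $E$-action on $T_{\ell}$ (from absoluteness of Hodge cycles) commutes with $G_{k}$ after a finite extension, so this invariant subspace is a free $E_{\ell}$-module of rank one. Choosing a generator $v$, and writing $\psi=\mathrm{Tr}_{E/\Q}\phi$ with $\phi$ the $E$-bilinear form on $T$ from \cite{Za83}, the Gram matrix of $(\alpha_{1}v,\dots,\alpha_{d}v)$ for a $\Q$-basis $(\alpha_{i})$ of $E$ has determinant $N_{E/\Q}(\phi(v,v))$ times the discriminant of $E/\Q$ up to squares. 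Thus
\[ \delta(N_{\mathfrak p})\equiv d_{E}\cdot N_{E_{\ell}/\Q_{\ell}}\bigl(\phi_{\ell}(v,v)\bigr)\pmod{(\Q_{\ell}^{*})^{2}}, \]
and the problem reduces to showing that this square-class in $\Q_{\ell}^{*}/(\Q_{\ell}^{*})^{2}$ takes at least two values as $\mathfrak p$ ranges over $U$.

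Third, I would show that the assignment $h\mapsto \delta(\ker(h-1))$ on the $\ell$-adic-open subset of $H_{\ell}$ where $\ker(h-1)$ has minimal dimension $[E:\Q]$ is locally constant and non-constant. Local constancy follows from the continuity of $h\mapsto \phi_{\ell}(v_{h},v_{h})$ (for a continuously chosen generator $v_{h}$) together with the discreteness of $\Q_{\ell}^{*}/(\Q_{\ell}^{*})^{2}$. For non-constancy, I would use the Weil-restriction description $H_{\ell}=\mathrm{Res}_{E_{\ell}/\Q_{\ell}}SO_{E_{\ell}}(T_{\ell},\phi_{\ell})$ from the proof of Proposition \ref{eigenvalues}: for any anisotropic $E_{\ell}$-line $L\subset T_{\ell}$, an element acting as the identity on $L$ and as a fixed-point-free rotation on $L^{\perp}$ lies in $H_{\ell}$ and has $1$-eigenspace exactly $L$. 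This requires $\dim_{E}L^{\perp}=\dim_{E}T-1$ to be even and positive, which holds because the weight-$2$ Hodge structure on $T$ forces $\dim_{E}T\ge 3$: both $H^{2,0}$ and its complex conjugate $H^{0,2}$ must lie in the single $E_{\C}$-isotypic component $T\otimes_{E,\sigma_{0}}\C$ corresponding to the distinguished real embedding $\sigma_{0}$, making this component at least $2$-dimensional, hence $\dim_{E}T\ge 2$ and thus $\ge 3$ by oddness. Picking two anisotropic lines $L_{1},L_{2}\subset T_{\ell}$ whose generators $v_{i}$ satisfy $N_{E_{\ell}/\Q_{\ell}}(\phi_{\ell}(v_{i},v_{i}))$ in distinct classes modulo $(\Q_{\ell}^{*})^{2}$ produces two elements of $H_{\ell}$ realizing the two values.

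The $\ell$-adic Chebotarev theorem, as used in the proof of Theorem \ref{jump}, then guarantees that each of the two non-empty fibers of this locally constant function meets the Frobenii at a set of primes of positive density, yielding infinitely many pairs $(\mathfrak p,\mathfrak q)$ satisfying all three conditions. The main obstacle is the construction in step three of two anisotropic $E_{\ell}$-lines with distinct norm-discriminants: since $\phi_{\ell}$ is non-degenerate of $E_{\ell}$-rank $\dim_{E}T\ge 3$, it represents many square-classes in $E_{\ell}$, but one must verify that the norm map $N_{E_{\ell}/\Q_{\ell}}\colon E_{\ell}^{*}/(E_{\ell}^{*})^{2}\to \Q_{\ell}^{*}/(\Q_{\ell}^{*})^{2}$ does not collapse all represented classes, which should hold for all but finitely many $\ell$ by local quadratic form considerations.
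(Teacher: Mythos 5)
Your overall architecture matches the paper's: restrict to density-one sets of ordinary places where $\rho_{\mathfrak p}=\rho+[E:\Q]$, identify the ``new'' part of the N\'eron--Severi group with the $1$-eigenspace of Frobenius on $T_{\ell}(1)$ (a rank-one $E_{\ell}$-module), observe that the discriminant of this eigenspace is a locally constant function on the relevant locus of $H_{\ell}$ (your continuity argument plays the role of the paper's Lemma \ref{approx}), and conclude by Chebotarev once two distinct values are exhibited. Your argument that $\dim_E T\ge 3$ is also exactly the paper's. But there is a genuine gap at the one step that carries the whole proposition: producing two elements of $H_{\ell}$ whose $1$-eigenspaces have distinct discriminants in $\Q_{\ell}^*/(\Q_{\ell}^*)^2$. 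You reduce this to the claim that the composite ``square classes represented by $\phi_{\ell}$'' $\to$ ``$N_{E_{\ell}/\Q_{\ell}}$'' $\to \Q_{\ell}^*/(\Q_{\ell}^*)^2$ is non-constant, and you leave it at ``should hold for all but finitely many $\ell$.'' That is not a proof, and the order of quantifiers is against you: you fix $\ell$ first, but for unfavorable $\ell$ (e.g.\ when every completion $E_{\lambda}$ with $\lambda\mid\ell$ contains the biquadratic extension of $\Q_{\ell}$, so that $N_{E_{\lambda}/\Q_{\ell}}(E_{\lambda}^*)\subseteq(\Q_{\ell}^*)^2$) the norm map kills all square classes and your invariant is constant. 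To rescue your route you would at minimum have to choose $\ell$ carefully (say, split in $E$) and then carry out the local quadratic-form analysis you only gesture at.

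The paper avoids this entirely by working globally and archimedeanly before ever choosing $\ell$: diagonalizing $\phi$ over $E$ gives $T=T_1\oplus\dots\oplus T_r$ with each $T_i$ an $E$-line; since no $T_i\otimes\C$ can contain $T^{2,0}\oplus T^{0,2}$, the Hodge index theorem forces each $T_i$ to have signature $(0,[E:\Q])$ or $(1,[E:\Q]-1)$, and since $r\ge 3$ while the total signature is $(2,\dim T-2)$, both types occur. Two such $T_i,T_j$ have discriminants of opposite sign, hence distinct in $\Q^*/(\Q^*)^2$, and \emph{then} one picks $\ell$ where they remain distinct in $\Q_{\ell}^*/(\Q_{\ell}^*)^2$. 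This signature argument is the key idea missing from your write-up; without it (or a completed version of your local computation) the proposition is not proved. A secondary point, common to both arguments but worth flagging: one must also ensure the two discriminant values are realized by elements of the image of Galois, not merely of $H(\Q_{\ell})$, which is where the $\ell$-adic openness of the monodromy image enters.
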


\begin{rk}
 A specific case of this result is that the method developed in \cite{vL07} to prove that a
given $K3$ surface over a number field has Picard number $1$ always works in the case $E=\Q$.
We noted in Remark \ref{cex} that it cannot work directly otherwise. 

In the next section, we will adapt the method so as to make it work in every case.
\end{rk}

We start with some easy linear algebra.

\begin{lem}\label{approx}
Let $\ell$ be a prime number, and let $V$ be a free module of finite rank over $\Z_{\ell}$. Let
$g$ be an endomorphism of $V$ such that $g\otimes\Q_{\ell}$ is a semisimple automorphism of
$V\otimes\Q_{\ell}$, and denote by $r$ the multiplicity of $1$ as an eigenvalue of $g$. Let $W$
be the eigenspace associated to the eigenvalue $1$ of $g$. Let $d$ be a positive integer.

Then there exists an integer $N$ with the following property. Let $h$ be an endomorphism of $V$
such that $h\otimes\Q_{\ell}$ is a semisimple automorphism of $V\otimes\Q_{\ell}$. Assume that
$r$ is the multiplicity of $1$ as an eigenvalue of $h$, and let $W'$ be the eigenspace
associated to the eigenvalue $1$ of $h$. If $h$ is congruent to $g$ modulo $\ell^N$, then
$W\otimes\Z/\ell^d\Z =W'\otimes\Z/\ell^d\Z$.
\end{lem}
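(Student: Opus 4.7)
The plan is to encode the fact that eigenspaces vary $\ell$-adically continuously when the multiplicity of the eigenvalue is kept constant, via a block decomposition of $V$ together with a Schur complement computation.

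First I would choose a $\Z_\ell$-basis of $V$ whose first $r$ vectors form a basis of $W$. In this basis the matrix of $g$ takes the block form
\[
g = \begin{pmatrix} I_r & B \\ 0 & A \end{pmatrix},
\]
and the semisimplicity of $g \otimes \Q_\ell$ together with the multiplicity hypothesis forces $A - I_{n-r}$ to be invertible over $\Q_\ell$, since its eigenvalues are precisely the eigenvalues of $g$ different from $1$. Let $k$ denote the $\ell$-adic valuation of $\det(A-I)$, so that $(A-I)^{-1}$ has entries in $\ell^{-k}\Z_\ell$, and take $N \ge d + k + 1$.

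Second, given $h$ with $h \equiv g \pmod{\ell^N}$, I would write $h - I$ in the same block form. A geometric series argument shows that the bottom-right block $A - I + \ell^N M_{22}$ remains invertible, with inverse still in $\ell^{-k}\Z_\ell$. The Schur complement with respect to this block is an $r \times r$ matrix whose kernel is in bijection with $\ker(h - I)$ via the first $r$ coordinates; since this kernel has dimension $r$ by the hypothesis on the multiplicity of $1$ in $h$, the Schur complement must vanish identically as a matrix. This gives an explicit parametrization
\[
\ker(h - I) = \bigl\{(a, b(a)) : a \in \Q_\ell^r\bigr\}, \qquad b(a) = -(A - I + \ell^N M_{22})^{-1} \ell^N M_{21}\, a,
\]
and for $a \in \Z_\ell^r$ one reads off that $b(a) \in \ell^{N-k}\Z_\ell^{n-r} \subset \ell^d V$.

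Finally, setting $u_i = b(e_i)$, I would consider the family $(e_i + u_i)_{1 \le i \le r}$. It lies in $V \cap \ker((h \otimes \Q_\ell) - I)$ and reduces to $(e_1, \dots, e_r)$ modulo $\ell$; Nakayama's lemma then identifies its $\Z_\ell$-span with the saturated eigenspace $W'$, since both are free of rank $r$ and coincide modulo $\ell$. As each $u_i$ lies in $\ell^d V$, the reductions of $W$ and $W'$ in $V / \ell^d V$ coincide. The only point requiring attention is tracking the denominator $\ell^k$ coming from $(A-I)^{-1}$ and checking it is preserved under the perturbation $A - I \mapsto A - I + \ell^N M_{22}$; the hypothesis that $h$ is itself semisimple with $1$ of multiplicity exactly $r$ is what forces the Schur complement to vanish identically, rather than only on some subspace.
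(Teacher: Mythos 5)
Your proof is correct, but it takes a genuinely different route from the paper's. The paper writes $V=W\oplus\widetilde W$ with $\widetilde W$ a $g$-invariant complement, uses the invertibility of $(g-1)$ on $\widetilde W\otimes\Q_{\ell}$ to produce an $N$ such that $(g-1)v\in\ell^N V$ with $v\in\widetilde W$ forces $v\in\ell^d\widetilde W$, applies this to the $\widetilde W$-component of any $h$-fixed vector (for which $(g-1)v=(g-h)v\in\ell^N V$) to obtain the inclusion $W'\otimes\Z/\ell^d\Z\subseteq W\otimes\Z/\ell^d\Z$, and then upgrades the inclusion to an equality because both sides come from saturated submodules of the same rank $r$. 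You instead complete a basis of $W$ by an arbitrary, not necessarily $g$-invariant, complement and replace the abstract quantitative-injectivity step by an explicit Schur-complement computation that parametrizes $\ker(h-1)$ outright and exhibits generators of $W'$ congruent to those of $W$ modulo $\ell^d$. Your version yields an explicit admissible $N$ (namely $d+v_{\ell}(\det(A-I))+1$), gives both inclusions at once so that no final saturation/rank-counting step is needed, and sidesteps the slightly delicate point that a $g$-invariant complement of $W$ inside $V$ (as opposed to inside $V\otimes\Q_{\ell}$) in general exists only up to finite index. The cost is a longer computation. Note that both arguments use the semisimplicity of $h$ and the multiplicity hypothesis at the same spot: to know that the fixed space of $h\otimes\Q_{\ell}$ has dimension exactly $r$, which in your setup is precisely what forces the Schur complement to vanish identically.
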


\begin{rk}
 In particular, if $V$ is endowed with a symmetric bilinear form, the restriction of which to
$W$ is not degenerate, and $N$ is sufficiently large, then the discriminants of $W$ and $W'$
are equal in $\Q_{\ell}^*/(\Q_{\ell}^*)^2$.
\end{rk}

\begin{proof}
 Write $V=W\oplus\widetilde W$, where $\widetilde W$ is a $g$-invariant submodule of $V$. Since
$g\otimes\Q_{\ell}$ does not fix any nonzero element of $\widetilde W\otimes\Q_{\ell}$, there
exists an integer $N$ such that if $g(v)-v\in l^N V$ for some $v\in \widetilde W$, then $v\in
\ell^k\widetilde W$.

Let $h$ be as in the statement of the lemma. By definition of $N$, if $v\in V$ is fixed by $h$,
then $v\otimes \Z/\ell^k\Z\in W\otimes\Z/\ell^k\Z$. With the notation of the lemma, it follows
that $W'\otimes\Z/\ell^k\Z \subset W\otimes\Z/\ell^k\Z$. Since both $W$ and $W'$ are saturated
submodules of $V$ of the same rank $r$, equality follows.
\end{proof}

\begin{proof}[Proof of Proposition \ref{disc}]
First note that the dimension of $T$ as a vector space over $E$ is at least $3$. Indeed, let
$\omega$ be a generator of $T^{2,0}\subset T\otimes\C$, and let $\sigma : E\ra\C$ be the
complex embedding of $E$ satisfying
$$\forall e\in E, e.\omega=\sigma(e)\omega.$$
The complex lines $\C\omega$ and $\C\overline\omega$ are two distinct one-dimensional subspaces
of $T_E\otimes_{\sigma}\C$, where $T_E$ denotes $T$ endowed with the structure of a vector
space over $E$. As a consequence, the dimension of $T$ as a vector space over $E$ is at least
$2$, and at least $3$ since we assumed it to be odd.

\bigskip

Recall that $\psi$ is the bilinear form on $T$ induced by cup-product. As in Proposition
\ref{eigenvalues}, there exists a unique $E$-bilinear form $\phi : T\times T\ra E$ such that
$\psi=Tr_{E/\Q}(\phi)$. Any orthogonal basis of $T_E$ with respect to $\phi$ induces an
orthogonal decomposition of $T$ with respect to $\psi$.
$$T=T_1\oplus \ldots \oplus T_r$$
where the $T_i$ are stable under the action of $E$ and of dimension $1$ as $E$-vector spaces.

By the same reasoning as above, since the $T_i$ are one-dimensional over $E$, there is no
integer $i$ such that $T_i\otimes\C$ contains the two-dimensional space $T^{2,0}\oplus
T^{0,2}$. 

The signature of $\psi$ on $T$ is $(2, dim(T)-2)$. By the Hodge index theorem and the remark above, the signature of the restriction of $\psi$ to $T_i$ is either $(0, [E:\Q])$ or $(1, [E:\Q]-1)$. Since the dimension of $T$ over $E$ is at least $3$, both these signatures appear, and this implies that there exist integers $i$ and
$j$ such that the discriminant of $T_i$ is negative and the discriminant of $T_j$ is positive. Let $\delta_i$ and $\delta_j$ be these two discriminants in $\Q^*/(\Q^*)^2$.

\bigskip

Since $\delta_i\neq \delta_j$ in $\Q^*/(\Q^*)^2$, there exists a prime number such that the
images of $\delta_i$ and $\delta_j$ in $\Q_{\ell}^*/(\Q_{\ell}^*)^2$ are different. If $W$ is
any subspace of $T_{\ell}$ such that the restriction of $\psi_{\ell}$ to $W$ is non-degenerate,
let $\delta(W)$ denote the discriminant of $W$ in $\Q_{\ell}^*/(\Q_{\ell}^*)^2$

By Lemma \ref{approx}, Proposition \ref{eigenvalues} and Chebotarev's density theorem, we can
find, for any positive integer $d$, infinitely many pairs $(\mathfrak p,\mathfrak q)$ of finite
places of $k$ such that
\begin{enumerate}
 \item $X$ has good, ordinary reduction at both $\mathfrak p$ and $\mathfrak q$.
 \item $\rho_{\mathfrak p}=\rho_{\mathfrak q}=\rho+[E:\Q].$
 \item If $F_{\mathfrak p}$ (resp. $F_{\mathfrak q}$) denotes the geometric Frobenius at
$\mathfrak p$ (resp. $\mathfrak q$) acting on $T_{\ell}(1)$, and $W_{\mathfrak p}$ (resp.
$W_{\mathfrak q}$) denotes the eigenspace associated to the eigenvalue $1$ of $F_{\mathfrak p}$
(resp. $F_{\mathfrak q}$), then $\delta(W_{\mathfrak p})=\delta_i$ in
$\Q_{\ell}^*/(\Q_{\ell}^*)^2$ (resp. $\delta(W_{\mathfrak q})=\delta_j$ in
$\Q_{\ell}^*/(\Q_{\ell}^*)^2$).
 \item The geometric Frobenius $F_{\mathfrak p}$ (resp. $F_{\mathfrak q}$) denotes the
geometric Frobenius at $\mathfrak p$ (resp. $\mathfrak q$) acting on $T_{\ell}(1)$ does not
have any eigenvalue different from $1$ that is a root of unity.
\end{enumerate}
Proposition \ref{disc} immediately follows by the Tate conjecture for ordinary $K3$ surfaces.
\end{proof}

\begin{rk}
 The proof above shows that the density of pairs $(\mathfrak p, \mathfrak q)$ as in the
proposition is positive.
\end{rk}

\section{Computing the Picard number over number fields}

This section is devoted to a proof of Theorem \ref{compute}. Given a projective $K3$ surface
over a number field $k$, we want to compute the Picard number of $X$ using the equations of
$X$ in a projective embedding.

There are two steps in our approach. The first one is finding sufficiently many divisors on
$X$, and the second is proving that these divisors generate the N\'eron-Severi group of $X$ --
at least rationally.

In case we want to prove that the $K3$ surface has Picard number $1$, the first step is
vacuous, as we already have a divisor given by a hyperplane section. In general, the first
step is done by going through the Hilbert schemes of curves in the projective space we are
working in and doing elimination theory to find curves on $X$. After a finite number of
computations, this will allow us to find divisors on $X$ that span the N\'eron-Severi group.

The second step will be done by reducing to finite characteristic and using our results above.

\bigskip

However, this is not sufficient. Indeed, the field $E$ of endomorphisms of the transcendental
part of the Hodge structure of $X$ plays a role in the behavior of the Picard number after
specialization, and in case $E$ is a totally real field strictly containing $\Q$ such that $T$
is of odd dimension over $E$, this leads to some loss of accuracy in the estimates reduction
at finite places can provide.

This problem will be solved by studying codimension $2$ varieties in $X\times X$. Assuming the
Hodge conjecture for $X\times X$, these determine the field $E$, which will allow us to
conclude.

We start by the following result.

\begin{prop}\label{check}
 Let $X$ be a $K3$ surface over a number field $k$. Assume we are given the equations of $X$
in some projective embedding. 

Let $T$ be the transcendental part of $H^2(X, \Q)$, and let $E$ be the field of endomorphisms
of the Hodge structure $T$. 

Assume that we know that the Picard number of $X$ is greater or equal to some integer $\rho$,
and that the degree of $E$ over $\Q$ is greater or equal to some integer $d$.

Then there exists an algorithm with the following properties: 
\begin{enumerate}
 \item  Suppose that the Picard number of $X$ is actually $\rho$. Then the algorithm terminates
 unless $E$ is totally real, the dimension of $T$ as a vector space over $E$ is odd and
$d<[E:\Q]$.
 \item If the algorithm terminates, it proves that Picard number of $X$ is $\rho$.
\end{enumerate}
\end{prop}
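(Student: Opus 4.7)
The plan is to construct an algorithm that enumerates the finite places $\mathfrak p$ of $k$ in a fixed order, discarding those of bad reduction (which is detectable from the equations of $X$). At each good place $\mathfrak p$, we count points of $X_{\mathfrak p}$ over sufficiently many finite extensions of the residue field to compute the characteristic polynomial of geometric Frobenius on $H^2(\overline{X_{\mathfrak p}},\Q_\ell(1))$; the Weil conjectures ensure this polynomial has rational coefficients. From it we extract $b(\mathfrak p)$, the multiplicity of $1$ as an eigenvalue, which bounds $\rho_{\mathfrak p}$ from above with equality at ordinary places by Nygaard's theorem; ordinariness is detected from the Newton polygon. At ordinary places we further compute the discriminant class $\delta(\mathfrak p)\in\Q^*/(\Q^*)^2$ of $NS(\overline{X_{\mathfrak p}})$, which under the Tate conjecture can be recovered via the Artin--Tate formula from Frobenius data (as in \cite{vL07}) or by explicitly finding enough divisors on $X_{\mathfrak p}$ to generate $NS(\overline{X_{\mathfrak p}})\otimes\Q$.

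The algorithm halts as soon as one of two tests succeeds. Test \textbf{(T1)}: some visited place $\mathfrak p$ has $b(\mathfrak p)=\rho$; then $\rho(X)\leq b(\mathfrak p)=\rho$ by injectivity of specialization, so $\rho(X)=\rho$. Test \textbf{(T2)}: two visited ordinary places $\mathfrak p,\mathfrak q$ satisfy $b(\mathfrak p)=b(\mathfrak q)=\rho+d$ and $\delta(\mathfrak p)\neq\delta(\mathfrak q)$, and in addition every visited ordinary place has $b\geq\rho+d$. Assuming $\rho(X)=\rho$, termination in the first case of Theorem~\ref{jump} (where $E$ is CM or $\dim_E T$ is even) is immediate from part~(1) of that theorem, which after a finite base extension produces a density-one set of good places with $\rho_{\mathfrak p}=\rho$, so (T1) fires. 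In the second case with $d=[E:\Q]$, part~(2) of Theorem~\ref{jump} combined with Proposition~\ref{disc} produces a density-one set of ordinary good places with $b=\rho+d$ and infinitely many pairs among them with differing discriminants, so (T2) fires. The excluded case $d<[E:\Q]$ is exactly the one where Theorem~\ref{jump}(2) forces $b(\mathfrak p)\geq\rho+[E:\Q]>\rho+d$ at every good ordinary place and hence neither test can ever fire.

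For correctness, (T1) is immediate. For (T2), suppose it fires and $\rho(X)\geq\rho+1$ for contradiction. Apply Theorem~\ref{jump} to $X$ itself, writing $E'$ for its endomorphism field and noting $[E':\Q]\geq d$. If $X$ lies in the second case of Theorem~\ref{jump}, then $\rho_{\mathfrak p}\geq\rho(X)+[E':\Q]\geq\rho+1+d>\rho+d$ at every good ordinary place, contradicting the existence of a visited ordinary place with $b=\rho+d$. If $X$ lies in the first case, then a density-one set of good places has $\rho_{\mathfrak p}=\rho(X)$; the clause ``every visited ordinary place has $b\geq\rho+d$'' combined with an effective Chebotarev bound on the number of places processed forces $\rho(X)\geq\rho+d$, and then $\rho(X)=\rho+d$ follows from the observation $b=\rho+d$, so $NS(X)\hookrightarrow NS(\overline{X_{\mathfrak p}})$ is a finite-index inclusion at both visited places and $\delta(\mathfrak p)\equiv\delta(X)\equiv\delta(\mathfrak q)\pmod{(\Q^*)^2}$, contradicting the $\delta$-inequality in (T2). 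The hard part is precisely this appeal to an effective form of Chebotarev, which is needed so that (T2) does not prematurely trigger on a finite number of ``exceptional'' ordinary places before the density-one behavior becomes visible; quantifying this effective bound in terms of the Galois representation on $H^2(\overline{X},\Q_\ell)$ is the key technical point.
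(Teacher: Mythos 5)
Your algorithm (enumerate places, compute Frobenius data, halt on one of two tests) is essentially the paper's, and your termination analysis is fine; but your \emph{correctness} argument for test (T2) has a genuine gap, and it is a gap the paper's proof is specifically designed to avoid. You try to rule out a premature, incorrect firing of (T2) by invoking an effective Chebotarev bound that would certify ``enough'' ordinary places have been visited for the density-one behavior of Theorem~\ref{jump} to be visible. You correctly flag this as the key technical point, but you do not supply it, and it cannot be supplied in the form you need: the density-one statements in Theorem~\ref{jump} hold only after an unknown finite extension $k'/k$, and any effective constant would depend on the image of the Galois representation on $H^2(\overline X,\Q_\ell)$, which is exactly the kind of data the algorithm does not have access to. As written, part (2) of the Proposition (``if the algorithm terminates, it proves the Picard number is $\rho$'') is not established. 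The paper's argument is pointwise and needs no density input at all: if $\rho_{\mathfrak p}=\rho_{\mathfrak q}$ and $\delta(\mathfrak p)\neq\delta(\mathfrak q)$, then \emph{both} specialization maps fail to be surjective (a rational isomorphism at either place would force both discriminants to equal that of $NS(\overline X)$ modulo squares); hence $NS(\overline{X_{\mathfrak p}})_{\Q_\ell}\cap T_\ell(1)\neq 0$. This intersection is stable under $E$, because $E$ commutes with a power of Frobenius and (under Tate for ordinary $K3$'s) $NS(\overline{X_{\mathfrak p}})_{\Q_\ell}$ is cut out by that Frobenius power, so the intersection has dimension at least $[E:\Q]\geq d$, giving $\rho'\leq\rho_{\mathfrak p}-d=\rho$ directly from the data at the two places, in every case for $E$. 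Your clause ``every visited ordinary place has $b\geq\rho+d$'' is then unnecessary.

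A secondary but real error: you take $b(\mathfrak p)$ to be the multiplicity of $1$ as an eigenvalue of Frobenius on $H^2(\overline{X_{\mathfrak p}},\Q_\ell(1))$ and claim it bounds $\rho_{\mathfrak p}$ from above. It does not: classes in $NS(\overline{X_{\mathfrak p}})$ are only fixed by some \emph{power} of Frobenius, so they contribute eigenvalues that are arbitrary roots of unity, not just $1$. Test (T1) as stated could therefore fail to certify $\rho(X)\leq\rho$. The fix is the one in the paper's proof of Theorem~\ref{jump}: bound $\rho_{\mathfrak p}$ by the number of eigenvalues (with multiplicity) lying in the finite set $S$ of roots of unity of degree at most $\dim H^2$ over $\Q$, or equivalently work with a suitable power of Frobenius.
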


\begin{proof}
 Let $\rho'$ be the actual Picard number of $X$. We know that $\rho'\geq\rho$. Using the Weil
conjectures \cite{De74}, we can compute the characteristic polynomial of Frobenius at any finite place of
$k$, see \cite{vL07}, \cite{EJ11}. This allows in particular to check whether $X$ has good,
ordinary reduction at a given place $\mathfrak p$, and to compute the numbers $\rho_{\mathfrak
p}$ for such places. Using the Artin-Tate formula, one can also compute the discriminants
$\delta(\mathfrak p)$ as in Proposition \ref{disc}.

We start computing $\rho_{\mathfrak p}$ and $\delta(\mathfrak p)$ for all ordinary places
$\mathfrak p$. 

Let us distinguish three cases. First assume that $E$ is a CM field. By Theorem \ref{jump}, we
can find $\mathfrak p$ with $\rho_{\mathfrak p}=\rho'$. If it happens that $\rho$, the lower
bound for the Picard number of $X$ that we were given, is equal to the actual Picard number
$\rho'$ (that we do not know yet), the computation at $\mathfrak p$ together with this lower
bound proves that $X$ has Picard number $\rho=\rho'$.

\bigskip
Now assume that $E$ is totally real and the dimension of $T$ as a vector
space over $E$ is even. In that case, Theorem \ref{jump} allows us to make the
same conclusion.

\bigskip

The last case happens when $E$ is totally real and the dimension of $T$ as a vector
space over $E$ is odd. By Proposition \ref{disc}, the finite field computations give us two
finite places $\mathfrak p$ and $\mathfrak q$ of $k$ where $X$ has good, ordinary reduction,
with $\rho_{\mathfrak{p}}=\rho_{\mathfrak q}=\rho'+[E:\Q]$, and $\delta(\mathfrak p)\neq
\delta(\mathfrak q)$. 

Since $\delta(\mathfrak p)\neq \delta(\mathfrak q)$, we know that the specialization maps
$NS(\overline X)\ra NS(\overline X_{\mathfrak p})$ and $NS(\overline X)\ra NS(\overline
X_{\mathfrak p})$ are not surjective. This means that $NS(X_{\mathfrak p})\cap T_l(1)$ is
nonzero in $H^2(\overline X_{\mathfrak p}, \Q_l(1))$.

Now we know by the analysis in the proof of Theorem \ref{jump} that this intersection is
stable under the action of $E$. As a consequence, its dimension is at least $[E:\Q]\geq d$.
This gives us the estimation 
$$\rho'\leq \rho_{\mathfrak p}-d.$$
In case $d$ happens to be equal to the actual degree $[E:\Q]$ and $\rho=\rho'$, these
estimates allow is to prove that $X$ has Picard number $\rho=\rho'$.
\end{proof}

\begin{rk}
 In case $\rho=\rho'=1$ and $E=\Q$, this proves the method of \cite{vL07} always works.
\end{rk}

\begin{proof}[Proof of Theorem \ref{compute}]
 Let $X$, $E$ and $T$ be as above. Let $\rho'$ be the Picard number of $X$ and $d'$ be the
degree of $E$ over $\Q$. By Proposition \ref{check}, we only need to be able to prove that the
Picard number of $X$ is at least $\rho'$ and the degree of $E$ over $\Q$ is at least $d'$.

The assertion on the Picard number is theoretically -- although not computationally -- easy.
One
can go through Hilbert schemes of curves in the projective space where $X$ is given
and check, using elimination theory, for curves that happen to lie on $X$. Computing
intersection matrices with this divisors on $X$, one can find divisors that span a
$\rho'$-dimensional subset of the N\'eron-Severi group of $X$.

Running these Hilbert schemes computations alongside the computations of Proposition
\ref{check} allows for a computation of the Picard number of $X$ unless $E$ is a totally real
field strictly containing $\Q$ such that $T$ is of odd dimension over $E$.

\bigskip

To deal with the latter case, one has to work on $X\times X$. If one assumes the Hodge
conjecture for $X\times X$, then elements of $E$ are induced by codimension $2$ cycles in
$X\times X$. As above, one can use Hilbert schemes to find codimension $2$ subschemes in
$X\times X$. 

Given such a subscheme $Z$, the action of $Z$ on $T$ can be determined by first
computing the characteristic polynomial of the correspondence $H^2(X, \Q)\ra H^2(X, \Q)$ by
computing intersection numbers between $T$ and the various subschemes obtained by composing
the correspondence induced by $Z$ with itself. 

Factoring the characteristic polynomial, this gives candidates for the algebraic number
$\lambda$ such that $[Z]_*\eta=\lambda\eta$, where $\eta$ is a nonzero algebraic $2$-form on
$X$. An approximate computation can then determine $\lambda$. The degree of $\lambda$ over
$\Q$ is a lower bound for $[E:\Q]$. 

By the primitive element theorem, it is easy to see that one can find $Z$ such that this
computation gives an optimal estimate for the degree of $E$. Using Proposition \ref{check},
this concludes the proof. 

\bigskip

In conclusion, an algorithm to compute Picard number of $K3$ surfaces works as follows. Let $X$
be a $K3$ surface. Run the three following algorithms alongside each other.
\begin{enumerate}
 \item Going through Hilbert schemes of a suitable projective space, find divisors on $X$ and
compute the dimension of their span in the N\'eron-Severi group via intersection theory. This
gives a lower bound for the Picard number.
 \item Going through Hilbert schemes of a suitable projective space, find codimension $2$
cycles in $X\times X$. Using intersection theory again, use these to get a lower bound on the
field $E$ of endomorphisms of the transcendental part of $H^2(X, \Q)$.
 \item Going through finite places $\mathfrak p$ of $k$, compute the Picard number and the
discriminant of the N\'eron-Severi group of $\overline X_{\mathfrak p}$ by counting points over
finite fields. Using the preceding step, get an upper bound on the Picard number of $X$.
\end{enumerate}
We showed that the estimates provided by the method solve the problem unconditionally unless
$E$ is a totally real field strictly containing $\Q$ and the transcendental part of $H^2(X,
\Q)$ is of odd dimension over $E$. In the latter case, the estimates above are sufficiently
precise to compute the Picard number if we assume the Hodge conjecture for $X\times X$.

\end{proof}

\begin{rk}
 It seems that the computations of the second step above would be very
lengthy to do in practice. We however wanted to point out that they could be done
theoretically. 

Note that the computations terminate much faster in most cases, since $E=\Q$ for the majority of $K3$
complex surfaces, in the sense of Baire category.
\end{rk}

\bibliography{bibK3}{}
\bibliographystyle{plain}
\end{document}